\numberwithin{equation}{section}
\newtheorem{theorem}{Theorem}[section]
\newtheorem{proposition}[theorem]{Proposition}
\newtheorem{definition}[theorem]{Definition}
\newtheorem{remark}[theorem]{Remark}
\newtheorem{lemma}[theorem]{Lemma}
\newtheorem{example}[theorem]{Example}
\newtheorem{corollary}[theorem]{Corollary}
\newcommand{\edge}{\ar@{-}}
\newcommand{\pf}{\noindent\begin {proof}}
\newcommand{\epf}{\end{proof}}
\newcommand{\Hom}{\mbox{\rm Hom}}
\newcommand{\extdim}{\mbox{\rm ext.dim}}
\def\Ker{\mathop{\rm Ker}\nolimits}
\def\mod{\mathop{\rm mod}\nolimits}
\def\pd{\mathop{\rm pd}\nolimits}
\def\min{\mathop{\rm min}\nolimits}
\def\inf{\mathop{\rm inf}\nolimits}
\def\add{\mathop{\rm add}\nolimits}
\def\gldim{\mathop{\rm gl.dim}\nolimits}
\def\rad{\mathop{{\rm rad}}\nolimits}
\def\tridim{\mathop{\rm tri.dim}\nolimits}
\def\Hom{\mathop{\rm Hom}\nolimits}
\def\lim{\mathop{\underrightarrow{\rm lim}}\nolimits}
\def\End{\mathop{\rm End}\nolimits}
\def\mod{\mathop{\rm mod}\nolimits}
\def\pd{\mathop{\rm pd}\nolimits}
\def\min{\mathop{\rm min}\nolimits}
\def\inf{\mathop{\rm inf}\nolimits}
\def\add{\mathop{\rm add}\nolimits}
\def\gldim{\mathop{\rm gl.dim}\nolimits}
\def\rad{\mathop{{\rm rad}}\nolimits}
\def\Hom{\mathop{\rm Hom}\nolimits}
\def\lim{\mathop{\underrightarrow{\rm lim}}\nolimits}
\def\End{\mathop{\rm End}\nolimits}
\def\repdim{\mathop{\rm rep.dim}\nolimits}
\def\wresoldim{\mathop{\rm w.resol.dim}\nolimits}
\def\lrepdis{\mathop{\rm lrep.dis}\nolimits}
\def\C{\mathop{\rm \mathcal{C}}\nolimits}
\def\V{\mathop{\rm \mathcal{V}}\nolimits}
\def\T{\mathop{\rm \mathcal{T}}\nolimits}
\def\I{\mathop{\rm \mathcal{I}}\nolimits}
\def\LL{\mathop{\rm LL}\nolimits}
\title{ \bf The derived dimensions and representation distances of artin algebras
\footnotetext{
2020 Mathematics Subject Classification: 18G20, 16E10, 18E10.}\\
\footnotetext{
Keywords: derived dimension, Igusa-Todorov algebra, endomorphism algebra, left idealized extension. }
\footnotetext{Email addresses: zhengjunling@cjlu.edu.cn (J. Zheng), yyzhang@zjhu.edu.cn (Y. Zhang)
}
}
\author {  Junling  Zheng, Yingying Zhang\thanks{Corresponding author} \\
{\it \scriptsize  Department of Mathematics, China Jiliang University, Hangzhou, 310018, Zhejiang Province, P. R. China
}\\
{\it \scriptsize  Department of Mathematics, Huzhou University, Huzhou, 313000, Zhejiang Province,
P. R. China }
\\
}
\date{ }
\begin{document}

\baselineskip=16pt

%%%%%%%%%%%%%%%%%%%%%%%%%%%%%%%%%%%%%%%%%

\maketitle

\begin{abstract}
There is a well-known class of algebras called Igusa-Todorov algebras which were introduced in relation to finitistic dimension conjecture. As a generalization of Igusa-Todorov algebras, the new notion of $(m,n)$-Igusa-Todorov algebras provides a wider framework for studying derived dimensions. In this paper, we give methods for constructing $(m,n)$-Igusa-Todorov algebras. As an application, we present for general artin algebras a relationship between
the derived dimension and the representation distance. Moreover, we end this paper to show that the main result can be used to give a better upper bound for the derived dimension for some classes of
algebras. 
\end{abstract}

\pagestyle{myheadings}
\markboth{\rightline {\scriptsize  J. L. Zheng et al.\emph{}}}
         {\leftline{\scriptsize The derived dimensions and representation distances of artin algebras }}

 % ????

\section{Introduction} %delete * to number this section

Rouquier introduced in \cite{rouquier2006representation,rouquier2008dimensions} the
 dimension of a triangulated category
under the idea of Bondal and van den Bergh
in \cite{bondal2003generators}.
Roughly speaking, it is an invariant that measures how quickly the category can be built from one object.
This dimension plays
an important role in representation theory(\cite{ballard2012orlov,
bergh2015gorenstein,
chen2008algebras,
han2009derived,
oppermann2012generating,
rouquier2006representation,
rouquier2008dimensions,
zhang2022tau,
zheng2020upper}).
For example, it can be used to compute the representation dimension of
artin algebras (\cite{oppermann2009lower,rouquier2006representation}).
Similar to the dimension of triangulated categories, the extension dimension of an abelian category
was introduced by Beligiannis in \cite{beligiannis2008some}, also see \cite{dao2014radius}. The size of the extension dimension reflects how far an artin algebra is from a finite
representation type since an artin algebra $\Lambda$ is finite representation type if and only if $\extdim \Lambda=0$(see \cite[Example 1.6(i)]{beligiannis2008some}). For more related results on the extension dimension, we refer to \cite{zhang24, zheng2020extension}.

Let $\Lambda$ be a non-semisimple artin algebra, we denote the category of finitely generated left $\Lambda$-modules by $\Lambda$-$\mod$, the bounded derived category of
$\Lambda$-$\mod$ by $D^{b}(\Lambda\text{-}\mod)$, the Beligiannis dimension of $\Lambda$-$\mod$ by $\extdim \Lambda$ (Definition \ref{extensiondimension}), the Rouquier dimension of $D^{b}(\Lambda\text{-}\mod)$ by $\tridim
D^{b}(\Lambda\text{-}\mod)$ (Definition \ref{def-2.3}), the Loewy length of $\Lambda$ by $\LL(\Lambda)$ ,
the global dimension of $\Lambda$ by $\gldim \Lambda$, the representation dimension of $\Lambda$ by $\repdim \Lambda$
 (see \cite{auslander1999representation}), and the weak resolution dimension (Definition \ref{def-2.2}) of $\Lambda$ by  $\wresoldim \Lambda $(see \cite{iyama2003rejective}).
In \cite{zheng2022thedimension}, the authors gave an upper bound for $\tridim D^{b}(\Lambda\text{-}\mod)$ in terms of $\extdim \Lambda$, i.e., $\tridim D^{b}(\Lambda\text{-}\mod)\leqslant 2\extdim \Lambda+1$. The dimensions mentioned above have the
following relation(see \cite{zheng2020extension})
$$\extdim \Lambda \leqslant \min\{\LL(\Lambda)-1, \gldim \Lambda, \repdim \Lambda-2\}.$$ It follows that $\extdim \Lambda$ is always finite, since $\repdim \Lambda$ is always finite(\cite{iyama2003finiteness}).
The finitistic dimension of $\Lambda$ is defined to be the supremum of the projective dimensions of all finitely generated modules of finite projective dimension. The famous finitistic
dimension conjecture claims that the finitistic dimension of $\Lambda$ is finite.

Using the Igusa-Todorov functions, Xi developed new ideas to understand the finistic dimension conjecture (\cite{xi2004finitistic, xi2008finitistic}). For example, he introduced the notion of left idealized extension (Definition \ref{leftideaext}). In \cite{wei2009finitistic}, Wei introduced the notion of $n$-Igusa-Todorov algebras over which the finitisic dimension conjecture holds. In particular, he combined notions of left idealized extensions and Igusa-Todorov algebras providing many new algebras satisfying the finistic dimension conjecture and showed that the class of 2-Igusa-Todorov algebras is closed under taking the endomorphism algebra of projective modules.

In \cite{zheng2022derivedmnIT}, Zheng introduced the notion of $(m,n)$-Igusa-Todorov algebras (Definition \ref{mnITalgebra}),
which is a generalization of $n$-Igusa-Todorov algebras, and for a given $(m,n)$-Igusa-Todorov algebra $\Lambda$, gave a new upper bound for
$\tridim D^{b}(\Lambda\text{-}\mod)$ in terms of $m$ and $n$. In this paper (Theorem \ref{(m,2)-IT} and Theorem \ref{thm-4.4}), we will describe some constructions of $(m,n)$-Igusa-Todorov algebras. As an application, we give a connection between the derived dimensions (Definition \ref{def-2.3}) and left representation distances (Definition \ref{distance}) of general artin algebras, that is, we have $\tridim D^{b}(\Lambda\text{-}\mod) \leqslant 2\lrepdis(\Lambda)+1$ for any artin algbera $\Lambda$.

Our main results can be stated as follows. The following theorem combines endomorphism algebras and $(m,n)$-Igusa-Todorov algebras, generalizing the result of \cite[Theorem 3.10]{wei2009finitistic}.
\begin{theorem}{\rm (Theorem \ref{(m,2)-IT})}\label{maintheorem1.1}
Let $\Lambda$ be an artin algebra and $\Gamma =\End_{\Lambda}(P)^{op}$ the endomorphism algebra of a projective module in $\Lambda\text{-}\mod$. If $\Lambda$ is an $(m,j)$-Igusa-Todorov algebra for some $j\in\{0,1,2\}$, then $\Gamma $ is also an $(m,j)$-Igusa-Todorov algebra.
\end{theorem}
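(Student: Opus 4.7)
The plan is to transport a witness module for the $(m,j)$-Igusa-Todorov property from $\Lambda$ down to $\Gamma$ along the Hom-functor $F=\Hom_\Lambda(P,-)\colon\Lambda\text{-}\mod\to\Gamma\text{-}\mod$. Since $P$ is projective, $F$ is exact, preserves direct summands, and restricts to an equivalence of categories $\add P\simeq \proj\Gamma$; this is the only structural input I will need.

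Given $Y\in\Gamma\text{-}\mod$, I would first lift it to a $\Lambda$-module $\tilde Y$ as follows: take a projective presentation $Q_1\to Q_0\to Y\to 0$ in $\Gamma\text{-}\mod$, use the equivalence $\add P\simeq \proj\Gamma$ to lift this to a morphism $P_1\to P_0$ in $\add P$, and set $\tilde Y=\coker(P_1\to P_0)$. Then $F(\tilde Y)=Y$, and an inductive application of Schanuel's lemma shows that $F(\Omega_\Lambda^{\,m}\tilde Y)$ and $\Omega_\Gamma^{\,m} Y$ agree modulo projective $\Gamma$-summands.

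Next I would invoke the $(m,j)$-Igusa-Todorov hypothesis on $\tilde Y$. In the case $j=2$ this yields a short exact sequence $0\to V_1\to V_0\to \Omega_\Lambda^{\,m}\tilde Y\to 0$ with $V_0,V_1\in\add V$; applying the exact functor $F$ gives $0\to F(V_1)\to F(V_0)\to F(\Omega_\Lambda^{\,m}\tilde Y)\to 0$ in $\Gamma\text{-}\mod$. I would then propose $V'=F(V)\oplus\Gamma$ as the witness module for $\Gamma$. Because $\add V'$ contains every projective $\Gamma$-module, the projective discrepancy between $F(\Omega_\Lambda^{\,m}\tilde Y)$ and $\Omega_\Gamma^{\,m}Y$ can be absorbed: after enlarging $F(V_0)$ by a suitable projective, the error term splits off (being projective), and pulling back along the inclusion $\Omega_\Gamma^{\,m}Y\hookrightarrow \Omega_\Gamma^{\,m}Y\oplus(\text{proj.})$ produces a short exact sequence $0\to F(V_1)\to X\to \Omega_\Gamma^{\,m}Y\to 0$ in which $X$ is a direct summand of a module in $\add V'$, hence $X\in\add V'$. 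The cases $j=0$ and $j=1$ are handled by the same device but more directly, since they demand only that $\Omega_\Gamma^{\,m}Y$ lie in, respectively be a quotient of a module in, $\add V'$.

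The main obstacle is precisely the failure of $F$ to commute with syzygies on the nose: $F(\Omega_\Lambda^{\,m}\tilde Y)$ equals $\Omega_\Gamma^{\,m}Y$ only up to projective $\Gamma$-summands. It is resolved by enlarging the witness to include $\Gamma$, so that $\add V'$ absorbs the projective corrections, and by the pullback-splitting manoeuvre above, which localizes the correction at a single nontrivial position of the transported sequence. Keeping the correction confined to one position is what makes the strategy clean for $j\leq 2$; beyond that, one would need to track projective corrections across multiple terms of the lifted resolution simultaneously, which would require a more elaborate bookkeeping than the present method provides.
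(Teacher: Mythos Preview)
Your proposal rests on a misreading of Definition~\ref{mnITalgebra}: in an $(m,n)$-Igusa--Todorov algebra the first parameter $m$ is the \emph{length} of the $\add V$-resolution, while the second parameter $n$ is the power of the syzygy. Thus an $(m,j)$-IT structure on $\Lambda$ provides, for each $M$, an exact sequence
\[
0 \to V_m \to \cdots \to V_0 \to \Omega_\Lambda^{\,j}(M) \to 0,
\]
not a length-$j$ sequence ending in $\Omega_\Lambda^{\,m}(M)$. You have swapped the roles, writing ``$0\to V_1\to V_0\to \Omega_\Lambda^{\,m}\tilde Y\to 0$'' for $j=2$ and a bare membership condition for $j=0$.

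This is not a harmless relabelling: it moves the real obstacle to a step you then assert without proof. Your claim that ``an inductive application of Schanuel's lemma shows that $F(\Omega_\Lambda^{\,m}\tilde Y)$ and $\Omega_\Gamma^{\,m}Y$ agree modulo projective $\Gamma$-summands'' for arbitrary $m$ is precisely the delicate point. Schanuel compares two \emph{projective} resolutions, but $F$ of a general $\Lambda$-projective need not be $\Gamma$-projective (only $F(\add P)=\proj\Gamma$), and the complex $P\otimes_\Gamma(\text{$\Gamma$-projective resolution of }Y)$ is exact only at its last two spots since $P\otimes_\Gamma-$ is merely right exact. The paper addresses exactly this via Lemma~\ref{syzygy-iso}, which secures $\Omega_\Gamma^{\,j}(X)\cong \Hom_\Lambda(P,\Omega_\Lambda^{\,j}(P\otimes_\Gamma X)\oplus Q)$ for $j=0,1,2$ only; the restriction $j\leqslant 2$ in the theorem is there because this syzygy comparison is \emph{not} known for $j\geqslant 3$ (the authors flag this as open right after Theorem~\ref{maintheorem1.1}). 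With the parameters read correctly, the transported resolution has arbitrary length $m$, the projective correction appears once at the right-hand end, and no ``tracking across multiple terms'' is required---so your diagnosis of the obstacle is inverted as well. The paper's argument, after Lemma~\ref{syzygy-iso}, is simply to apply the exact functor $\Hom_\Lambda(P,-)$ to the $(m,j)$-IT sequence for $M=P\otimes_\Gamma X$ (augmented by the projective $Q$), yielding directly a length-$m$ sequence in $\add_\Gamma\Hom_\Lambda(P,V\oplus\Lambda)$ ending in $\Omega_\Gamma^{\,j}(X)$.
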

An interesting question to ask is what happens in more general set up in Theorem \ref{maintheorem1.1}. Namely, do the endomorphism algebras of projective modules over $(m,j)$-IT algebras stay $(m,j)$-IT for $j\geqslant3$? At this time the question remains open and we thank the referee for pointing this out to us.
\begin{theorem}{\rm (Theorem \ref{thm-4.4})}\label{maintheorem1.2}
Let $\Lambda $ be an artin algebra, such that there exists a chain of artin algebras
$\Lambda =\Lambda _{0}\subseteq \Lambda_{1}\subseteq \cdots\subseteq\Lambda_{m}=\Gamma$ with $m\geqslant1$, where $\Lambda_{i+1}$
is a left idealized extension of $\Lambda_{i}$ for $0\leqslant i\leqslant m-1$, and $\Gamma$ is representation-finite.
Then $\Lambda$ is an $(m-1,2)$-Igusa-Todorov algebra.
In particular, $\tridim D^{b}(\Lambda\text{-}\mod ) \leqslant 2m+1$ and $\extdim \Lambda \leqslant m+1.$
\end{theorem}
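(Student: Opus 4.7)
The plan is to prove the $(m-1,2)$-Igusa-Todorov property by induction on the chain length $m$, after which the two dimension bounds follow by invoking the known inequalities for $(p,q)$-Igusa-Todorov algebras from \cite{zheng2022derivedmnIT} specialized to $(p,q) = (m-1,2)$ (yielding $\tridim D^{b}(\Lambda\text{-}\mod) \leqslant 2(m-1)+2+1 = 2m+1$ and $\extdim \Lambda \leqslant (m-1)+2 = m+1$). The argument rests on two ingredients matching the two kinds of step we need: a base case handling the representation-finite top of the chain, and an inductive ``shift'' lemma that pushes the $(k,2)$-Igusa-Todorov property down one level of a left idealized extension.

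For the base case, I would show that if $\Gamma$ is representation-finite and $\Lambda \subseteq \Gamma$ is a left idealized extension, then $\Lambda$ is a $(0,2)$-Igusa-Todorov algebra. The witnessing module is $V := {}_{\Lambda}V_{\Gamma} \oplus \Lambda/\rad\Lambda$, where $V_{\Gamma}$ is a direct sum of one copy of each indecomposable $\Gamma$-module. Given a $\Lambda$-module $M$ with projective cover $P \twoheadrightarrow M$ and kernel $K$, the inclusion $K \subseteq \rad P = (\rad\Lambda)P$, together with the fact that $(\rad\Lambda)P$ is a left $\Gamma$-submodule of $P$ (since $\rad\Lambda$ is a left $\Gamma$-ideal), produces short exact sequences $0 \to K \to \rad P \to \rad M \to 0$ and $0 \to \rad M \to M \to M/\rad M \to 0$ whose terms other than $M$ all lie in $\add V$. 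Splicing these gives the required $(0,2)$-Igusa-Todorov witness for $M$, essentially recovering Wei's analysis for representation-finite targets.

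For the inductive shift, I would prove: if $\Lambda \subseteq \Gamma$ is a left idealized extension and $\Gamma$ is $(k,2)$-Igusa-Todorov with witnessing module $W$, then $\Lambda$ is $(k+1,2)$-Igusa-Todorov with witnessing module $V := {}_{\Lambda}W \oplus \Lambda/\rad\Lambda$. Given a $\Lambda$-module $M$, first apply the base-case construction to $\Omega^{k}_{\Lambda} M$ to produce a short exact sequence of $\Lambda$-modules whose middle term is (isomorphic to) a genuine $\Gamma$-module $N$; then invoke the $(k,2)$-Igusa-Todorov hypothesis over $\Gamma$ applied to $N$, yielding a two-term $\add W$-resolution of $\Omega^{k}_{\Gamma} N$; finally, compare $\Omega^{k+1}_{\Lambda} M$ with $\Omega^{k}_{\Gamma} N$ up to projective summands and splice the exact sequences to obtain the desired two-term $\add V$-resolution of $\Omega^{k+1}_{\Lambda} M$. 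Applying the base case to $\Lambda_{m-1} \subseteq \Lambda_{m} = \Gamma$ and then iterating the shift lemma $m-1$ times along the chain shows $\Lambda_{m-j}$ is $(j-1,2)$-Igusa-Todorov, and setting $j = m$ gives the claim.

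The main obstacle is the inductive shift step. The delicate point is that the $\Lambda$- and $\Gamma$-syzygies of a module defined over both algebras do not in general agree: they can differ by summands that are $\Gamma$-projective without being $\Lambda$-projective, so the bookkeeping between the two syzygy operators has to be done carefully so that the final splice really produces a length-two resolution over $\Lambda$ and the witnessing module $V$ remains a single fixed finitely generated $\Lambda$-module independent of $M$.
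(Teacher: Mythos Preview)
Your inductive scaffolding is reasonable in spirit, but both steps contain concrete errors, and the single technical input that makes everything work---Xi's lemma (Lemma~\ref{7.3})---is absent from your plan. In the base case you must show $\Omega^2_\Lambda(M)\in\add V$ for every $M$ (that is exactly what $(0,2)$-IT means), yet the splice of $0\to K\to\rad P\to\rad M\to 0$ and $0\to\rad M\to M\to M/\rad M\to 0$ says nothing about $\Omega^2_\Lambda(M)$. Worse, your claim that $K$ and $\rad M$ lie in $\add V$ is unjustified: while $\rad P$ is indeed a $\Gamma$-module, its $\Lambda$-submodule $K=\Omega_\Lambda(M)$ need not be $\Gamma$-stable, so neither $K$ nor the quotient $\rad M$ is a $\Gamma$-module in general. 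What is actually needed is Lemma~\ref{7.3}: $\Omega^2_\Lambda(M)$ is already a $\Gamma$-module, in fact $\Omega^2_\Lambda(M)\cong\Omega_\Gamma(Z)\oplus Q$ with $_\Gamma Q$ projective. With this the base case is immediate, since $\Omega^2_\Lambda(M)\in\mod\Gamma=\add_\Lambda V_\Gamma$.

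In the shift step your indices are inverted: the $(k,2)$-IT hypothesis on $\Gamma$ provides a length-$(k{+}1)$ $\add W$-resolution of $\Omega^2_\Gamma(N)$, not a two-term resolution of $\Omega^k_\Gamma(N)$, and the relevant comparison is $\Omega^2_\Lambda(M)\cong\Omega_\Gamma(Z)\oplus Q$ (Lemma~\ref{7.3} again), not $\Omega^{k+1}_\Lambda(M)$ versus $\Omega^k_\Gamma(N)$. Once corrected, one splices the $\Gamma$-sequence $0\to\Omega^2_\Gamma(Z)\to P'\to\Omega_\Gamma(Z)\to 0$ (with $P'\in\add_\Gamma\Gamma$) onto the given $\add W$-resolution of $\Omega^2_\Gamma(Z)$ and adds the projective $Q$, obtaining a length-$(k{+}2)$ $\add_\Lambda(W\oplus\Gamma)$-resolution of $\Omega^2_\Lambda(M)$. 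This is exactly the paper's argument, which simply carries out all $m{-}1$ such steps in one pass rather than inductively, with witness $V={}_\Lambda W\oplus\bigoplus_{i=1}^{m-1}{}_\Lambda\Lambda_i$ where $\add_\Gamma W=\mod\Gamma$.
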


\section{Preliminaries}
\subsection{The dimension of a triangulated category}
  We recall some notions from
  \cite{oppermann2009lower,rouquier2006representation,rouquier2008dimensions}.
  Let $\T$ be a triangulated category and $\I \subseteq {\rm Ob}\T$.
  Let $\langle \I \rangle_{1}$ be the full subcategory of $\T$
  consisting
  of all direct summands of finite direct sums of shifts of
  objects in $\I$, that is, $\langle \I \rangle_{1}=\add\{X[i]\,|\,X\in \I, \,i\in \mathbb{Z}\}$.
  Given two subclasses $\I_{1}, \I_{2}\subseteq {\rm Ob}\T$,
  we denote by $\I_{1}*\I_{2}$
  the full subcategory of all extensions between them, that is,
  $$\I_{1}*\I_{2}=\{ X\mid  X_{1} \longrightarrow X
  \longrightarrow X_{2}\longrightarrow X_{1}[1]\;
  {\rm with}\; X_{1}\in \I_{1}\; {\rm and}\; X_{2}\in \I_{2}\}.$$
  Write $\I_{1}\diamond\I_{2}:=\langle\I_{1}*\I_{2} \rangle_{1}.$
  Then $(\I_{1}\diamond\I_{2})\diamond\I_{3}=\I_{1}
  \diamond(\I_{2}\diamond\I_{3})$
  for any subclass $\I_{1}, \I_{2}$ and $\I_{3}$
  of $\T$ by the octahedral axiom.
  Write
  \begin{align*}
  \langle \I \rangle_{0}:=0,\;
  %\langle \I \rangle_{1}:=\langle \I \rangle\; {\rm and}\;
  \langle \I \rangle_{n+1}:=\langle \I
  \rangle_{n}\diamond\langle \I \rangle_{1}\;{\rm for\; any \;}
  n\geqslant 1.
  \end{align*}

  %\begin{definition}{\rm
%    (\cite[Definiton 3.2]{rouquier2006representation})\label{tri.dimenson2.1}
%  The {\bf dimension} $\tridim \T$ of a triangulated category $\T$
%  is the minimal $d$ such that there exists an object $M\in \T$ with
%  $\T=\langle M \rangle_{d+1}$. If no such $M$ exists for any $d$,
%  then we set $\tridim \T=\infty.$
%  }
%  \end{definition}

  \begin{definition}\label{def-2.3}
{\rm Given a triangulated category $\T$.
\begin{itemize}
\item[$(1)$] (\cite{ rouquier2006representation}) The dimension of a triangulated category $\T$ is defined to be
$$\tridim {\mathcal{T}}:=\inf\{n\geqslant 0\mid\mathcal{T}=\langle T\rangle_{n+1}\ \text{\rm for some}\ T\in\mathcal{T}\},$$
or $\infty$ if no such a $T$ exists.
\item[$(2)$] (\cite{ oppermann2009lower}) For a subcategory $\C$ of $\T$, the dimension of $\C$ is defined to be
$$\tridim _{\T}\C:=\inf\{n\geqslant 0\mid \C \subseteq\langle T\rangle_{n+1}\ \text{\rm for some}\ T\in\mathcal{T}\},$$
or $\infty$ if no such a $T$ exists.
\end{itemize}}
\end{definition}

In this paper, we are mainly concerned with the dimension of the bounded derived category of an artin algebra $\Lambda$ denoted by $\tridim D^{b}(\Lambda\text{-}\mod)$, which is sometimes called derived dimension.

  \subsection{The extension dimension of a module category}
  Let $\Lambda$ be an artin algebra. All subcategories
   of $\Lambda$-$\mod$ are full, additive and closed under isomorphisms
  and all functors between categories are additive.
  For a subclass $\mathcal{U}$ of $\Lambda$-$\mod$,
   we use $\add \mathcal{U}$ to
  denote the subcategory of $\Lambda$-$\mod$ consisting of
  direct summands of finite direct sums of objects in $\mathcal{U}$.
  Let us recall some notions and basic facts (for example,
   see \cite{beligiannis2008some,zheng2020extension}).
  Let $\mathcal{U}_1,\mathcal{U}_2,\cdots,\mathcal{U}_n$
  be subcategories of $\Lambda$-$\mod$.
  Define
  $$\mathcal{U}_1\bullet \mathcal{U}_2:={\add}\{M\in  \Lambda\text{-}\mod
  \mid {\rm there \;exists \;a \;short \; exact \; sequence \;}
  0\rightarrow U_1\rightarrow  M \rightarrow U_2\rightarrow 0\
  $$$${\rm in}\ \Lambda\text{-}\mod\ {\rm with}\; U_1 \in \mathcal{U}_1 \;{\rm and}\;
  U_2 \in \mathcal{U}_2\}.$$
  For any subcategory $\mathcal{U},\mathcal{V}$ and $\mathcal{W}$ of $\Lambda$-$\mod$,
  by \cite[Proposition 2.2]{dao2014radius} we have
$$(\mathcal{U}\bullet\mathcal{V})\bullet\mathcal{W}=\mathcal{U}\bullet(\mathcal{V}\bullet\mathcal{W}).$$
  Inductively, define
  \begin{align*}
  \mathcal{U}_{1}\bullet  \mathcal{U}_{2}\bullet \dots \bullet\mathcal{U}_{n}:=
  \add \{M\in \mid {\rm there \;exists \;a \;short \; exact \; sequence}\
  0\rightarrow U\rightarrow  M \rightarrow V\rightarrow 0  \\{\rm in}\
   \Lambda\text{-}\mod\ {\rm with}\; U \in \mathcal{U}_{1} \;{\rm and}\;
  V \in  \mathcal{U}_{2}\bullet \dots \bullet\mathcal{U}_{n}\}.
  \end{align*}
  For a subcategory $\mathcal{U}$ of $\Lambda$-$\mod$, set
  $[\mathcal{U}]_{0}=0$, $[\mathcal{U}]_{1}=\add\mathcal{U}$,
  $[\mathcal{U}]_{n}=[\mathcal{U}]_1\bullet [\mathcal{U}]_{n-1}$
   for any $n\geqslant 2$.
  %In particular, let $T$ be an object in $\mod \Lambda$, we denote $\langle T\rangle_{1}=\langle T\rangle=\add T$.
If $T\in \Lambda\text{-}\mod$, we write $[T]_{n}$ instead of $[\{T\}]_{n}$.

  Let $X\in\Lambda\text{-}\mod$. Assume that $g: P\longrightarrow X$ is an epimorphism where $P$
  is a projective cover of $X$ in $\Lambda\text{-}\mod$, then we write $\Omega^{1}(X):=\Ker g$. Inductively, for any $n\geqslant 2$,
  we write $\Omega^{n}(X):=\Omega^{1}(\Omega^{n-1}(X))$.
  In particular, we set $\Omega^{0}(X):=X.$

  \begin{definition}\label{extensiondimension}
  {\rm (\cite{beligiannis2008some})
  The extension dimension of $\Lambda\text{-}\mod$ is defined to be
  $$\extdim \Lambda:=\inf\{n\geqslant 0\mid \Lambda\text{-}\mod=[ T]_{n+1}\ {\rm for some}\ T\in\Lambda\text{-}\mod\}.$$
  }
  \end{definition}

\begin{definition}\label{def-2.2}
{\rm (\cite[Definition 4.5(2)]{iyama2003rejective})
The {\bf weak resolution dimension} $\wresoldim \Lambda$ of an artin algebra $\Lambda$ is defined as the minimal
number $n\geqslant 0$ which satisfies the following equivalent conditions.

$(i)$ There exists $M\in\mod\Lambda$ such that, for any $X\in \mod\Lambda$,
there exists an exact sequence
 $$0\longrightarrow M_{n}\longrightarrow M_{n-1}\longrightarrow \cdots \longrightarrow M_{0}\longrightarrow Y\longrightarrow 0 $$
with $M_{i}\in\add M$ and $X\in\add Y$.

$(ii)$ There exists $M\in\mod\Lambda$ such that, for any $X\in\mod\Lambda$,
there exists an exact sequence
$$0\longrightarrow Y\longrightarrow M_{0}\longrightarrow M_{1}\longrightarrow\cdots\longrightarrow M_{n}\longrightarrow 0$$
with $M_{i}\in\add M$ and $X\in \add Y$.
}
\end{definition}

\begin{remark}
{\rm The notions of weak resolution dimension were both introduced by Iyama in \cite[Definition 4.5(2)]{iyama2003rejective} and by Oppermann in \cite[Definition 2.4]{oppermann2009lower}. We should remark that their definitions are different. Here we follow Iyama's weak resolution dimension.
}
\end{remark}

The following result established the relationship between the weak resolution dimension, the dimension of a subcategory of the bounded derived category and the extension dimension.

\begin{lemma}\label{lemma2.3}
For an artin algebra $\Lambda$, we have

$(1)$ {\rm (\cite[corollary 3.6]{zheng2020extension} and \cite[Lemma 2.9]{zhang24})} $\wresoldim \Lambda=\extdim \Lambda;$

$(2)$ {\rm (\cite[Lemma 3.2(2)]{zheng2022thedimension}) }$\tridim_{D^{b}(\Lambda\text{-}\mod)} \Lambda\text{-}\mod\leqslant \extdim \Lambda.$
\end{lemma}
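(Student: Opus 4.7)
The plan is to treat the two parts with different methods: part (2) is a formal consequence of the compatibility between short exact sequences in $\Lambda\text{-}\mod$ and distinguished triangles in $D^{b}(\Lambda\text{-}\mod)$, whereas part (1) is a purely abelian statement matching the $\bullet$-filtration defining $\extdim$ with the linear $\add M$-resolution defining $\wresoldim$.

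For the easier inequality $\extdim \Lambda \leqslant \wresoldim \Lambda$ of (1), I would take any $X$ together with the witnessing resolution $0 \to M_n \to \cdots \to M_0 \to Y \to 0$, $M_i\in\add M$, $X\in\add Y$, and split it into short exact sequences $0 \to K_{i+1} \to M_i \to K_i \to 0$ via the images $K_i = \Im(M_i \to M_{i-1})$, with $K_0 = Y$. Starting from $K_n = M_n \in [M]_1$ and applying the definition of $\bullet$ repeatedly, one gets $K_{n-1} \in [M]_2$, $\ldots$, $Y \in [M]_{n+1}$, and hence $X \in \add Y \subseteq [M]_{n+1}$. The reverse inequality $\wresoldim \Lambda \leqslant \extdim \Lambda$ I would prove by induction on $n := \extdim \Lambda$ with witness $T$. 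The base case $n=0$ reduces to $\Lambda\text{-}\mod = \add T$, so one takes $M = T$. For the inductive step, any $X \in [T]_{n+1}$ is a summand of some $N$ in a short exact sequence $0 \to A \to N \to B \to 0$ with $A \in \add T$ and $B \in [T]_n$; inductively pick an $\add T$-resolution $0 \to T_{n-1} \to \cdots \to T_0 \to Y_B \to 0$ with $B$ a summand of $Y_B$, say $Y_B = B \oplus B''$. A pullback of $T_0 \twoheadrightarrow Y_B$ against the surjection $N \oplus B'' \twoheadrightarrow Y_B$ splices these two diagrams into an exact sequence of length $n$ whose terms lie in $\add T$ except possibly for the pullback $W$, which must then be absorbed by enlarging the test module to $T$ together with the relevant short-extension terms.

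For (2), suppose $\Lambda\text{-}\mod = [T]_{n+1}$ with $n = \extdim \Lambda$. Each short exact sequence $0 \to A \to B \to C \to 0$ in $\Lambda\text{-}\mod$ yields a distinguished triangle $A \to B \to C \to A[1]$ in $D^{b}(\Lambda\text{-}\mod)$, so the abelian operation $\mathcal{U} \bullet \mathcal{V}$ is contained in the triangulated operation $\mathcal{U} \diamond \mathcal{V}$ for subcategories $\mathcal{U}, \mathcal{V} \subseteq \Lambda\text{-}\mod$, and $\add T \subseteq \langle T \rangle_1$ tautologically. A straightforward induction on $k$ then gives $[T]_k \subseteq \langle T \rangle_k$, so $\Lambda\text{-}\mod \subseteq \langle T \rangle_{n+1}$ inside $D^{b}(\Lambda\text{-}\mod)$, yielding $\tridim_{D^{b}(\Lambda\text{-}\mod)} \Lambda\text{-}\mod \leqslant n$.

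The main obstacle is the pullback splicing in the inductive step of $\wresoldim \Lambda \leqslant \extdim \Lambda$: the pullback term $W$ fitting into $0 \to A \to W \to T_0 \to 0$ is a priori only in $[T]_2$ rather than in $\add T$, so careful bookkeeping—or a suitable enlargement of the test module $M$—is needed to keep the resulting resolution genuinely in $\add M$. This is exactly the content of \cite[Corollary 3.6]{zheng2020extension} and \cite[Lemma 2.9]{zhang24}. The rest of both parts is essentially a compatibility check between the $\bullet$, $\diamond$, and $\add$ operations.
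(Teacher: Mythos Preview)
The paper does not prove Lemma \ref{lemma2.3}; both parts are stated with citations to \cite{zheng2020extension}, \cite{zhang24} and \cite{zheng2022thedimension}, and no argument is supplied in the paper itself. Your sketches for part (2) and for the inequality $\extdim\Lambda\leqslant\wresoldim\Lambda$ in part (1) are correct and standard.

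For the reverse inequality $\wresoldim\Lambda\leqslant\extdim\Lambda$ you correctly isolate the obstruction at the pullback term $W$, but the proposed remedy---``enlarging the test module $M$''---does not work as stated. The module $M$ in Definition \ref{def-2.2} must be a \emph{single} module chosen once and for all $X$, whereas your $W$ sits in an extension $0\to A\to W\to T_0\to 0$ whose class in $\Ext^1_{\Lambda}(T_0,A)$ depends on $X$. Even with $A,T_0\in\add T$, such extensions can produce infinitely many pairwise non-isomorphic middle terms, so there is no fixed $M$ with all the relevant $W$ in $\add M$. Hence the induction as set up does not close, and ``careful bookkeeping'' alone will not repair it; a genuinely different argument (as in the cited references) is needed. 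In the end your proposal defers the hard direction of (1) to \cite{zheng2020extension} and \cite{zhang24}, which is precisely what the paper does.
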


Recall from \cite{auslander1999representation} that the representation dimension of $\Lambda$ is defined as
\begin{equation*}
\repdim\Lambda:=
\begin{cases}
\inf\{\gldim\End_{\Lambda}(M)\mid M\ \text{is a generator-cogenerator for}\ \mod\Lambda\},\ \text{if}\ \Lambda\ \text{is non-semisimple;}\\
1,\;\text{if}\ \Lambda\ \text{is semisimple.}
\end{cases}
\end{equation*}

\begin{lemma}{\rm (\cite[Corollary 3.6]{zheng2020extension})}\label{lemextension2}
  For any non-semisimple artin algebra $\Lambda$, we have
  $$\extdim \Lambda \leqslant \min\{\LL(\Lambda)-1, \gldim \Lambda, \repdim \Lambda-2\}.$$
\end{lemma}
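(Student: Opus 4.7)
The plan is to prove the three bounds separately, in each case exhibiting a single object $T\in\Lambda\text{-}\mod$ such that every module lies in $[T]_{n+1}$ for the appropriate $n$. The key general principle I will use is the following: if $0\to A\to B\to C\to 0$ is exact in $\Lambda\text{-}\mod$ with $A\in[T]_k$ and $C\in\add T$ (or $A\in\add T$ and $C\in[T]_k$), then by definition $B\in [T]_{k+1}$. Consequently, a filtration $M=M_0\supseteq M_1\supseteq\cdots\supseteq M_n=0$ whose successive quotients lie in $\add T$ yields $M\in[T]_n$, and an $\add T$-resolution $0\to T_n\to\cdots\to T_0\to M\to 0$ of length $n$ yields $M\in[T]_{n+1}$ by iterating through the syzygies.

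For $\extdim\Lambda\leqslant \LL(\Lambda)-1$: let $n=\LL(\Lambda)$ and take $T=\Lambda/\rad\Lambda$. The radical filtration $M\supseteq\rad M\supseteq\cdots\supseteq\rad^{n-1}M\supseteq\rad^n M=0$ has semisimple factors, each lying in $\add T$, so the filtration principle gives $M\in[T]_n$ for every $M\in\Lambda\text{-}\mod$.

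For $\extdim\Lambda\leqslant \gldim\Lambda$: let $d=\gldim\Lambda$ and take $T=\Lambda$. Any $M\in\Lambda\text{-}\mod$ admits a projective resolution $0\to P_d\to\cdots\to P_0\to M\to 0$ with $P_i\in\add\Lambda$, and by the resolution principle $M\in[\Lambda]_{d+1}$.

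For $\extdim\Lambda\leqslant \repdim\Lambda-2$: let $r=\repdim\Lambda$ and choose a generator-cogenerator $M$ with $\gldim\End_\Lambda(M)=r$ realizing this infimum. The substantive ingredient here is Auslander's classical theorem, which asserts that for such $M$ every $X\in\Lambda\text{-}\mod$ admits an exact sequence $0\to M_{r-2}\to\cdots\to M_0\to X\to 0$ with each $M_i\in\add M$; this is the only nontrivial input and the part I expect to cite rather than reprove. Applying the resolution principle with $T=M$ yields $X\in[M]_{r-1}$, hence $\extdim\Lambda\leqslant r-2$. Combining the three inequalities gives the claimed minimum, and the semisimplicity hypothesis is used only to ensure that $\LL(\Lambda)-1\geqslant 0$ and that the representation dimension is defined by the generator-cogenerator formula.
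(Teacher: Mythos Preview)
Your filtration argument for the Loewy-length bound is correct: the radical filtration of any $M$ has semisimple subquotients, so $M\in[\Lambda/\rad\Lambda]_{\LL(\Lambda)}$.

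However, the ``resolution principle'' you invoke --- that an $\add T$-resolution $0\to T_n\to\cdots\to T_0\to M\to 0$ forces $M\in[T]_{n+1}$ --- is false in general, and this breaks both the $\gldim$ and the $\repdim$ arguments as written. Iterating through the syzygies places each $T_i$ (not $M$, not the syzygies) in the middle of a short exact sequence, so nothing is learned about $M$. Concretely, take $T=\Lambda$: any short exact sequence $0\to P\to X\to Q\to 0$ with $P,Q$ projective splits, whence $[\Lambda]_n=\add\Lambda$ for \emph{every} $n\geqslant1$. Thus a projective resolution of length $d$ never puts $M$ into $[\Lambda]_{d+1}$ unless $M$ was already projective, and your choice $T=\Lambda$ for the global-dimension bound cannot work for any non-semisimple $\Lambda$. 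The same objection applies verbatim to the representation-dimension step: Auslander's $\add M$-resolution of length $r-2$ does not by itself place $X$ in $[M]_{r-1}$.

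The paper itself gives no proof here; the lemma is quoted from \cite[Corollary~3.6]{zheng2020extension}. There the route is different: one first establishes the non-trivial equality $\wresoldim\Lambda=\extdim\Lambda$ (recorded in this paper as Lemma~\ref{lemma2.3}(1)), after which the $\gldim$ and $\repdim-2$ bounds are immediate because the weak resolution dimension is \emph{defined} in terms of such $\add M$-resolutions. The passage from ``every module has an $\add T$-resolution of length $n$'' to ``every module lies in some $[T']_{n+1}$'' is precisely the content of that equality, and in general the witnessing object $T'$ for $\extdim$ differs from the witnessing object $T$ for $\wresoldim$. Your proof treats this implication as a triviality when it is the substantive step.
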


\begin{lemma}\label{ext-rep}{\rm (\cite[Example 1.6)(i)]{beligiannis2008some})}
Let $\Lambda$ be an artin algebra. Then $\Lambda$ is representation finite if and only if $\extdim \Lambda=0.$
\end{lemma}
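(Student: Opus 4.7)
The plan is to unfold the definitions on both sides and observe that, once one notes $[T]_{1}=\add T$ (which is immediate from the recursive definition given before Definition \ref{extensiondimension}), the equivalence becomes almost tautological. The only nontrivial ingredient is Krull--Schmidt, which will let me translate between the module-theoretic statement ``every module is a direct summand of a finite direct sum of copies of $T$'' and the categorical statement ``there are only finitely many indecomposables up to isomorphism''.

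For the forward direction, I assume $\Lambda$ is representation finite and let $\{M_{1},\dots,M_{k}\}$ be a complete set of representatives of the isomorphism classes of indecomposable objects of $\Lambda\text{-}\mod$. Setting $T=\bigoplus_{i=1}^{k}M_{i}$, any $X\in\Lambda\text{-}\mod$ decomposes by Krull--Schmidt into indecomposable summands, each of which is isomorphic to some $M_{i}$ and hence to a direct summand of $T$. Therefore $X\in\add T=[T]_{1}$, which shows $\Lambda\text{-}\mod=[T]_{1}$ and so $\extdim\Lambda=0$ by Definition \ref{extensiondimension}.

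For the converse, suppose $\extdim\Lambda=0$. Then by Definition \ref{extensiondimension} there exists $T\in\Lambda\text{-}\mod$ with $\Lambda\text{-}\mod=[T]_{1}=\add T$. In particular, every indecomposable $\Lambda$-module is a direct summand of some $T^{n}$, and by Krull--Schmidt it must be isomorphic to one of the finitely many indecomposable summands of $T$. Hence the isomorphism classes of indecomposables in $\Lambda\text{-}\mod$ form a finite set, i.e., $\Lambda$ is representation finite.

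There is no real technical obstacle: the argument is a short definition chase whose only substantive input is the Krull--Schmidt property of $\Lambda\text{-}\mod$. Indeed, the same proof works verbatim in any Krull--Schmidt abelian category for which the extension dimension is defined, which explains why the result is usually quoted without proof as in \cite{beligiannis2008some}.
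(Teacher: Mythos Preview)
Your proof is correct. The paper does not supply its own proof of this lemma; it simply quotes the result from \cite[Example~1.6(i)]{beligiannis2008some}, so there is nothing to compare against beyond noting that your Krull--Schmidt argument is exactly the standard one that underlies the cited reference.
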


It is clear that $\tridim$ of the bounded derived categories of an algebra $\Lambda$ and of the endomorphism algebra $\End_{\Lambda}T$ of a given tilting module are equal since $\Lambda$ and $\End_{\Lambda}T$ are derived equivalent and $\tridim$ is invariant under derived equivalent (see \cite[Lemma 3.3]{rouquier2008dimensions}). However, the extension dimensions
of the module categories of an algebra and of the endomorphism algebra of a tilting module can be different. In this sense, the extension dimension seems more refined than the
derived dimension, i.e. $\tridim$ of the derived category. In fact, by \cite[\uppercase\expandafter{\romannumeral8} Proposition 4.4]{assem2006elements} if an algebra is a path algebra of Euclidean type and a tilting module
has both a postprojective and preinjective direct summand, then the tilted algebra is representation-finite and their extension dimensions are always different. For the convenience
of the reader, we give an example here (for more details, see
\cite[\uppercase\expandafter{\romannumeral8} Examples 4.4(a)]{assem2006elements}).
\begin{example}{\rm
This is an example of an algebra and
a tilted algebra with different $\extdim$, but equal $\tridim$. Let $\Lambda$ be the path algebra of the Euclidean quiver $Q$ of type $\widetilde{A_{3}}$:
$$\begin{xy}
(15,0)*+{\begin{smallmatrix} 2\end{smallmatrix}}="1",
(0,-10)*+{\begin{smallmatrix} 1\end{smallmatrix}}="2",
(30,-10)*+{\begin{smallmatrix} 4\end{smallmatrix}}="3",
(15,-20)*+{\begin{smallmatrix} 3\end{smallmatrix}}="4",
\ar"1";"2",\ar"3";"1",\ar"4";"2",\ar"3";"4"
\end{xy}$$
Since $\Lambda$ is representation infinite, we know that $\extdim \Lambda\geqslant 1$ by Lemma \ref{ext-rep}. On the other hand $\gldim \Lambda=1$, hence $\extdim \Lambda\leqslant 1$ by Lemma \ref{lemextension2}. Therefore, $\extdim \Lambda=1$.
Let $T=1\oplus \begin{smallmatrix}4\\3\\1\end{smallmatrix}\oplus \begin{smallmatrix}4\\2\\1\end{smallmatrix}\oplus 4$ be a tilting
module.
Then the tilted algebra $\Gamma =\End_{\Lambda}T$ is representation-finite with $\extdim \Gamma=0$. In fact, $\Gamma$ is given by the quiver
$$\begin{xy}
(15,0)*+{\begin{smallmatrix} 2\end{smallmatrix}}="1",
(0,-10)*+{\begin{smallmatrix} 1\end{smallmatrix}}="2",
(30,-10)*+{\begin{smallmatrix} 4\end{smallmatrix}}="3",
(15,-20)*+{\begin{smallmatrix} 3\end{smallmatrix}}="4",
\ar_{\beta }"1";"2",\ar_{\alpha }"3";"1",\ar^{\delta }"4";"2",\ar^{\gamma }"3";"4"
\end{xy}$$
bound by $\alpha\beta=0, \gamma\delta=0$.
So we have $\extdim \Lambda\neq \extdim \Gamma$, but we have
$\tridim D^{b}(\Lambda\text{-}\mod)=\tridim D^{b}(\Gamma\text{-}\mod)$ by \cite[Lemma 3.3]{rouquier2008dimensions}
(since $\Lambda$ and $\Gamma$ are derived equivalent).

%then we have $\repdim B =3$. And by \cite[Corollary (2)]{zheng2020extension}, we have
%$\extdim \mod B=1.$
}
\end{example}
Usually, it is difficult to give the precise value of the extension dimension of module categories. In the following, we deduce the extension dimensions of module categories of some
special algebras.

\begin{example}{\rm
Let $\widetilde{\Lambda}_{l,n}=KQ/I$ with
$$\xymatrix{
{1}\ar@<1.8ex>[r]^{x_{1}}\ar@<-1.5ex>[r]_{x_{n}}^{\vdots}&2\ar@<1.8ex>[r]^{x_{1}}\ar@<-1.5ex>[r]_{x_{n}}^{\vdots}&3&\cdots
&l-1\ar@<1.8ex>[r]^{x_{1}}\ar@<-1.5ex>[r]_{x_{n}}^{\vdots}&l
},$$
$$I=(x_{n^{''}}x_{n^{'}}+x_{n^{'}}x_{n^{''}},x^{2}_{n^{'}}\;|1\leqslant n',n''\leqslant n).$$
See \cite[Example A.7]{oppermann2009lower}, we know that
$$\tridim_{D^{b}(\widetilde{\Lambda}_{l,n}\text{-}\mod)}\widetilde{\Lambda}_{l,n}\text{-}\mod
=\repdim \widetilde{\Lambda}_{l,n}-2=\min\{l-1,n-1\}.$$
By Lemma \ref{lemma2.3}(2)  and Lemma \ref{lemextension2} , we get $$\extdim \widetilde{\Lambda}_{l,n}=\min\{l-1,n-1\}.$$
That is, the extension dimension can be arbitrarily large.
}
\end{example}

\subsection{$(m,n)$-Igusa-Todorov algebras}
Recently, in order to give an upper bound for the derived dimension of an artin algebra $\Lambda$, i.e. $\tridim D^{b}(\Lambda\text{-}\mod)$, Zheng introduced the notion of $(m,n)$-Igusa-Todorov algebras in \cite{zheng2022derivedmnIT} (here, see Remark \ref{ITextensiondimremark}(2)).

  \begin{definition}
  \label{mnITalgebra}
    {\rm (\cite{zheng2022derivedmnIT})
    Let $\Lambda$ be an artin algebra and $m, n$ be non-negative integers. Then $\Lambda$ is said
    to be an $(m,n)$-Igusa-Todorov algebra (or simply $(m,n)$-IT algebra) if there is a module
     $V\in \Lambda\text{-}\mod$
    such that for any module $M$ there exists an exact sequence
    $$0
    \longrightarrow V_{m}
    \longrightarrow V_{m-1}
    \longrightarrow
    \cdots
    \longrightarrow V_{1}
    \longrightarrow V_{0}
    \longrightarrow \Omega^{n}(M)
    \longrightarrow 0
    $$
  where $V_{i} \in \add V$ for each $0 \leqslant  i \leqslant m $.
  Such a module $V$ is said to be an $(m,n)$-IT module.
  }
  \end{definition}
\begin{remark}\label{ITextensiondimremark}
{\rm
%(1) By Definition \ref{mnITalgebra}, it is clear that $(1,n)$-IT algebras are exactly $n$-IT algebras.

(1) By Definition \ref{mnITalgebra} and Lemma \ref{lemma2.3}(1), we know that
the extension dimension of the $(m,n)$-IT algebra is at most $m+n.$

(2) Let $\Lambda$ be an $(m,n)$-IT algebra. Then by \cite[Theorem 1.2]{zheng2022derivedmnIT} $\tridim D^{b}(\Lambda\text{-}\mod) \leqslant 2m+n+1.$

(3) By \cite[Definition 2.1]{wei2009relative} and Definition \ref{mnITalgebra}, we can see that the relative hereditary algebras are $(1,0)$-IT algebras. By Remark \ref{ITextensiondimremark}(2), we see that $\tridim D^{b}(\Lambda\text{-}\mod) \leqslant 3$
for each relative hereditary algebra $\Lambda.$
}
\end{remark}

\section{$(m,n)$-IT algebras and endomorphism algebras}
In this section, we compare an algebra and its endomorphism algebra from the viewpoint of $(m,n)$-IT algebras. Let us begin with the following useful result.
 \begin{lemma}
    \label{syzygy-iso}
    Let $\Lambda$ be an artin algebra and $P$ a projective $\Lambda$-module with $\Gamma =\End_{\Lambda}(P)^{op}$.
     Then we have

{\rm (1)} {\rm (\cite[Lemma 3.1]{huang2013Endomorphism})}For any $X\in \Gamma \text{-}\mod$, there exists a projective $\Lambda$-module $Q$ such that
    $$\Omega_{\Gamma }^{2}(X)\cong \Hom_{\Lambda}(P,\Omega_{\Lambda}^{2}(P\otimes_{\Gamma }X)\oplus Q).$$

{\rm (2)} {\rm(\cite[Lemma 3.5]{wei2009relative})} For any $X\in \Gamma \text{-}\mod$, we have $X\cong \Hom_{\Lambda}(P,P\otimes_{\Gamma }X)$ .

{\rm (3)} For any $X\in \Gamma \text{-}\mod$, there exists a projective $\Lambda$-module $Q$ such that
    $$\Omega_{\Gamma }^{}(X)\cong \Hom_{\Lambda}(P,\Omega_{\Lambda}^{}(P\otimes_{\Gamma }X)\oplus Q).$$
  \end{lemma}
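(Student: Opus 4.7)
The plan is to mimic the argument behind part (1) but truncate it after one projective cover, using part (2) as the key bridge. The main idea is that a projective cover of $X$ in $\Gamma\text{-}\mod$ transfers, via the adjunction $(P\otimes_{\Gamma}-,\Hom_{\Lambda}(P,-))$, to a surjection in $\Lambda\text{-}\mod$ whose kernel contains $\Omega_{\Lambda}(P\otimes_{\Gamma}X)$ up to a projective summand.

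First, I would start from a projective cover $\alpha\colon Q_{0}\lr X$ in $\Gamma\text{-}\mod$. Because projective $\Gamma$-modules correspond via $\Hom_{\Lambda}(P,-)$ to objects of $\add P$, we may write $Q_{0}=\Hom_{\Lambda}(P,P_{0})$ with $P_{0}\in\add P$. Applying the right exact functor $P\otimes_{\Gamma}-$ to the exact sequence $0\lr\Omega_{\Gamma}(X)\lr Q_{0}\stackrel{\alpha}\lr X\lr 0$ and using the fact that the counit $P\otimes_{\Gamma}\Hom_{\Lambda}(P,P_{0})\cong P_{0}$ is an isomorphism (since $P_{0}\in\add P$), I obtain a surjection $\beta\colon P_{0}\twoheadrightarrow P\otimes_{\Gamma}X$ in $\Lambda\text{-}\mod$, which corresponds to $\alpha$ under the adjunction.

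Next, let $\pi\colon P_{1}\lr P\otimes_{\Gamma}X$ be a projective cover in $\Lambda\text{-}\mod$, so that $\Ker\pi=\Omega_{\Lambda}(P\otimes_{\Gamma}X)$. Since $P_{0}$ is projective, $\beta$ lifts through $\pi$ to a map $\gamma\colon P_{0}\lr P_{1}$. Because $\pi$ is a projective cover (hence $\Ker\pi\subseteq\rad P_{1}$) and $\beta=\pi\gamma$ is surjective, Nakayama's lemma forces $\gamma$ to be surjective as well. As $P_{1}$ is projective, this surjection splits: $P_{0}\cong P_{1}\oplus Q$ with $Q:=\Ker\gamma$ a projective $\Lambda$-module, and under this splitting $\Ker\beta\cong\Omega_{\Lambda}(P\otimes_{\Gamma}X)\oplus Q$.

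Finally, I would apply the exact functor $\Hom_{\Lambda}(P,-)$ to the short exact sequence $0\lr\Omega_{\Lambda}(P\otimes_{\Gamma}X)\oplus Q\lr P_{0}\stackrel{\beta}\lr P\otimes_{\Gamma}X\lr 0$; exactness of $\Hom_{\Lambda}(P,-)$ on projectives yields
\[
0\lr\Hom_{\Lambda}(P,\Omega_{\Lambda}(P\otimes_{\Gamma}X)\oplus Q)\lr Q_{0}\lr\Hom_{\Lambda}(P,P\otimes_{\Gamma}X)\lr 0,
\]
and by part (2) the right-hand term identifies with $X$, with the displayed surjection matching $\alpha$ via naturality of the unit. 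Comparing kernels with the original sequence $0\lr\Omega_{\Gamma}(X)\lr Q_{0}\lr X\lr 0$ gives the desired isomorphism. The only delicate point, and the main obstacle worth checking carefully, is this last identification: one must verify that the canonical isomorphism $X\cong\Hom_{\Lambda}(P,P\otimes_{\Gamma}X)$ from part (2) intertwines $\alpha$ with $\Hom_{\Lambda}(P,\beta)$, which follows from naturality of the adjunction unit but deserves an explicit sentence in the write-up.
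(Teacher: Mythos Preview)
Your proposal is correct and follows essentially the same route as the paper: start from a projective cover of $X$ over $\Gamma$, tensor up with $P$ to obtain a surjection from a projective $\Lambda$-module onto $P\otimes_{\Gamma}X$ whose kernel is $\Omega_{\Lambda}(P\otimes_{\Gamma}X)\oplus Q$, then apply $\Hom_{\Lambda}(P,-)$ and invoke part (2). The paper phrases the final identification via a commutative diagram and the Five Lemma, which is exactly the clean way to discharge the naturality concern you flag at the end; your Schanuel-type lifting argument for the shape of $\Ker\beta$ is a bit more explicit than the paper's one-line assertion, but the content is the same.
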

  \begin{proof}
      $(3)$ Consider the following short exact sequence
      $$0 \to \Omega_{\Gamma}(X) \to P_{0} \to X \to 0$$
      where $P_{0}$ is projective in $\Gamma\text{-}\mod $.
      Applying the functor $P\otimes_{\Gamma}-$ to the above short exact sequence, we get the
      following exact sequence
$$P\otimes_{\Gamma}\Omega_{\Gamma}(X) \to P\otimes_{\Gamma}P_{0} \to P\otimes_{\Gamma}X \to 0.$$
Moreover, we have the following short exact sequence in $\Lambda\text{-}\mod$
$$0\to \Omega_{\Lambda}(P\otimes_{\Gamma}X)\oplus Q \to P\otimes_{\Gamma}P_{0} \to P\otimes_{\Gamma}X \to 0,$$
where $Q$ is projective in $\Lambda\text{-}\mod $.
Applying the exact functor $\Hom_{\Lambda}(P,-)$ to the above short eaxct sequence, we can get the following commutative diagram with rows exact
  \[\xymatrix{
0 \ar[r]& \Hom_{\Lambda}(P,\Omega_{\Lambda}(P\otimes_{\Gamma}X)\oplus Q) \ar[r]\ar[d]^{\sigma_{1}}& \Hom_{\Lambda}(P,P\otimes_{\Gamma}P_{0})\ar[r]\ar[d]^{\sigma_{2}}& \Hom_{\Lambda}(P,P\otimes_{\Gamma}X)\ar[r]\ar[d]^{\sigma_{3}}&0\\
0 \ar[r]& \Omega_{\Gamma}(X)\ar[r]&P_{0}\ar[r]  & X\ar[r]&0.
}\]
By Lemma \ref{syzygy-iso}(2), we know that
$\sigma_{2}$ and $\sigma_{3}$ are two isomorphisms. And by Five lemma, we know that $\sigma_{1}$ is also an isomorphism.
That is, $\Omega_{\Gamma }^{}(X)\cong \Hom_{\Lambda}(P,\Omega_{\Lambda}^{}(P\otimes_{\Gamma }X)\oplus Q).$
  \end{proof}
Considering the case of $(m,j)$-IT algebras for $j\in\{0,1,2\}$, we have the following result which shows that the class of all $(m,j)$-IT algebras is closed under taking endomorphism algebras
of projective modules generalizing \cite[Theorem 3.10]{wei2009finitistic}.
  \begin{theorem}\label{(m,2)-IT}
  If $\Lambda$ is $(m,j)$-IT and $\Gamma =\End_{\Lambda}(P)^{op}$ the endomorphism algebra of a projective module in $\Lambda\text{-}\mod$ for some $j\in\{0,1,2\}$, then $\Gamma $ is also an $(m,j)$-IT algebra.
  \end{theorem}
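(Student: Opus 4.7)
The natural candidate for a $\Gamma$-side $(m,j)$-IT module is $W := \Hom_{\Lambda}(P, V \oplus \Lambda)$, where $V$ is a fixed $(m,j)$-IT module for $\Lambda$. The plan is to take an arbitrary $X \in \Gamma\text{-}\mod$, push it to $\Lambda\text{-}\mod$ via $M := P \otimes_{\Gamma} X$, apply the given $(m,j)$-IT resolution of $M$ on the $\Lambda$-side, and then pull the resulting sequence back through the functor $\Hom_{\Lambda}(P,-)$, which is exact because $P$ is projective over $\Lambda$.

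By the $(m,j)$-IT property of $\Lambda$, there is an exact sequence
$$0 \to V_m \to V_{m-1} \to \cdots \to V_0 \to \Omega^j_\Lambda(M) \to 0$$
with $V_i \in \add V$. When $j = 0$, applying $\Hom_\Lambda(P,-)$ directly produces an exact sequence whose rightmost term $\Hom_\Lambda(P, M)$ is canonically isomorphic to $X$ by Lemma \ref{syzygy-iso}(2), and each $\Hom_\Lambda(P, V_i)$ lies in $\add \Hom_\Lambda(P, V) \subseteq \add W$, which is the desired resolution.

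When $j \in \{1,2\}$, parts (3) and (1) of Lemma \ref{syzygy-iso} give an isomorphism
$$\Omega^j_\Gamma(X) \cong \Hom_\Lambda\bigl(P,\ \Omega^j_\Lambda(M) \oplus Q\bigr)$$
for some projective $\Lambda$-module $Q$. I would absorb $Q$ by augmenting the previous sequence to
$$0 \to V_m \to \cdots \to V_1 \to V_0 \oplus Q \to \Omega^j_\Lambda(M) \oplus Q \to 0,$$
which is still exact with $V_0 \oplus Q \in \add(V \oplus \Lambda)$, and then apply $\Hom_\Lambda(P,-)$. The rightmost term becomes $\Omega^j_\Gamma(X)$ by Lemma \ref{syzygy-iso}, while all other terms sit in $\add \Hom_\Lambda(P, V \oplus \Lambda) = \add W$, exactly as required by Definition \ref{mnITalgebra}.

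The only genuine obstacle is the projective correction term $Q$ forced on us by Lemma \ref{syzygy-iso}; the whole reason for enlarging $V$ to $V \oplus \Lambda$ in the definition of $W$ is to guarantee that $\Hom_\Lambda(P, Q) \in \add W$ for every projective $\Lambda$-module $Q$, so that $V_0 \oplus Q$ maps into $\add W$ under $\Hom_\Lambda(P,-)$. After that observation, the verification reduces to the exactness of $\Hom_\Lambda(P,-)$ on short exact sequences together with the identifications in Lemma \ref{syzygy-iso}. This approach visibly breaks down for $j \geq 3$, since Lemma \ref{syzygy-iso} provides no analogous identification of higher syzygies $\Omega^j_\Gamma(X)$ in terms of $\Omega^j_\Lambda(P \otimes_\Gamma X)$, which is precisely the reason the statement is restricted to $j \in \{0,1,2\}$ and the open question noted after Theorem \ref{maintheorem1.1} remains out of reach by this method.
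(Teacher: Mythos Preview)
Your proposal is correct and follows essentially the same argument as the paper: set $W=\Hom_{\Lambda}(P,V\oplus\Lambda)$, apply the $(m,j)$-IT resolution of $\Lambda$ to $M=P\otimes_{\Gamma}X$, absorb the projective correction $Q$ coming from Lemma~\ref{syzygy-iso} into $V_{0}\oplus Q\in\add(V\oplus\Lambda)$, and push everything through the exact functor $\Hom_{\Lambda}(P,-)$. The only cosmetic difference is that the paper treats $j=0,1,2$ uniformly via Lemma~\ref{syzygy-iso} (with $Q=0$ when $j=0$), whereas you single out the $j=0$ case; your remark on why the method fails for $j\geq 3$ is also exactly the obstruction the paper notes.
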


  \begin{proof}
    Let $X\in \Gamma \text{-}\mod$. By Lemma \ref{syzygy-iso}, for $j\in\{0,1,2\}$, we know that
    $\Omega_{\Gamma }^{j}(X)\cong \Hom_{\Lambda}(P,\Omega_{\Lambda}^{j}(P\otimes_{\Gamma }X)\oplus Q)$ with $Q$ a projective $\Lambda$-module.
    Since $\Lambda$ is an $(m,j)$-IT algebra, we know that
    there is a module
       $V\in \Lambda\text{-}\mod$
      such that for any module $M$ there exists an exact sequence
      $$0
      \rightarrow V_{m}
      \rightarrow V_{m-1}
      \rightarrow
      \cdots
      \rightarrow V_{1}
      \rightarrow V_{0}
      \rightarrow \Omega^{j}_{\Lambda}(M)
      \rightarrow 0$$
    where $V_{i} \in \add V$ for each $0 \leqslant  i \leqslant m $.
    In particular, take $M:=P\otimes_{\Gamma }X.$ We have the following exact sequence
    \begin{equation}0
      \rightarrow V_{m}
      \rightarrow V_{m-1}
      \rightarrow
      \cdots
      \rightarrow V_{1}
      \rightarrow V_{0}\oplus Q
      \rightarrow \Omega^{j}_{\Lambda}(P\otimes_{\Gamma }X)\oplus Q
      \rightarrow 0.
      \end{equation}

    Since $P$ is projective, applying the functor $\Hom_{\Lambda}(P,-)$ to (3.1)
    we can get the following exact sequence
    $$0
      \rightarrow \Hom_{\Lambda}(P,V_{m})
      \rightarrow
      \cdots
      \rightarrow \Hom_{\Lambda}(P,V_{1})
      \rightarrow \Hom_{\Lambda}(P,V_{0}\oplus Q)
      \rightarrow \Hom_{\Lambda}(P,\Omega^{j}_{\Lambda}(P\otimes_{\Gamma }X)\oplus Q)
      \rightarrow 0
      $$
    in $\Gamma \text{-}\mod$. Then we get the following exact sequence
    $$0
      \rightarrow \Hom_{\Lambda}(P,V_{m})
      \rightarrow \Hom_{\Lambda}(P,V_{m-1})
      \rightarrow
      \cdots
      \rightarrow \Hom_{\Lambda}(P,V_{1})
      \rightarrow \Hom_{\Lambda}(P,V_{0}\oplus Q)
      \rightarrow \Omega_{\Gamma }^{j}(X)
      \rightarrow 0
      $$
    in $\Gamma \text{-}\mod$, where $\Hom_{\Lambda}(P,V_{i}), \Hom_{\Lambda}(P,V_{0}\oplus Q)\in \add_{\Gamma } (\Hom_{\Lambda}(P,V\oplus \Lambda))$ in $\Gamma \text{-}\mod$
    for each $1\leqslant  i \leqslant m.$
    By Definition \ref{mnITalgebra},
    it follows that  $\Gamma $ is an $(m,j)$-IT algebra.
    \end{proof}

\section{$(m,n)$-IT algebras and left idealized extensions}

%And in \cite{xi2005Erratum}, Xi introduce the following definition.
%\begin{definition}\label{1.1}
%{\rm
%Given an artin algebra $A$, we say that the left representation
%distance of $A$, denoted by $\lrepdis(A)$, is the minimum of the lengths $s$ of
%chains of subalgebras
%$ A=A_{0}\subseteq A_{1} \subseteq A_{2}\subseteq\cdots \subseteq A_{s}$ such that $\rad (A_{i})$ is a left
%ideal in $A_{i+1}$ for all $i$ and that $A_{s}$ is representation-finite.
%}
%\end{definition}
In this section we prove that Xi's construction of left idealized extensions creates new
$(m,n)$-IT algebras. First, let us recall:
\begin{definition}\label{leftideaext}
{\rm (\cite{xi2004finitistic})
Let $\Lambda $ and $\Gamma $ be two artin algebras. We say that $\Gamma $ is a left
idealized extension of $\Lambda $, provided that $\Lambda \subseteq \Gamma $ has the same
identity and $\rad \Lambda $ is a left ideal in $\Gamma $.
}
\end{definition}

\begin{remark}{\rm
In \cite[Lemma 4.6]{xi2004finitistic}, Xi showed that
every finite dimensional algebra admits a chain of left
idealized extensions which is of finite length and ends with a representation-finite algebra.
}
\end{remark}
The following result is important in the sequel.
\begin{lemma}\label{7.3}{\rm(\cite[Lemma 0.1 and Lemma 0.2]{xi2005Erratum})}
 Let $\Gamma$ be a left idealized extension of $\Lambda$. Then for any $X\in \Lambda \text{-}{\rm mod}$,
we have $\Omega_{\Lambda}^{2}(X)$ is a left $\Gamma$-module and $\Omega^{2}_{\Lambda}(X)\cong \Omega_{\Gamma}(Z)\oplus Q$ as left $\Gamma$-modules for
some $Z, Q\in \Gamma \text{-}\mod$ with ${_{\Gamma}Q}$ projective.%and integer $i\geqslant 2$}
\end{lemma}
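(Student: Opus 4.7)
The plan is to establish (a) via a natural $\Gamma$-module structure on radicals of $\Lambda$-projective modules, and then derive (b) through a Schanuel-lemma plus Krull--Schmidt cancellation argument. The starting structural observation is this: since $\Lambda\subseteq\Gamma$ share the same unit, every primitive idempotent $e$ of $\Lambda$ is also an idempotent of $\Gamma$; and because $\rad\Lambda$ is a left ideal of $\Gamma$, each $\rad\Lambda\cdot e$ is a $\Gamma$-submodule of the $\Gamma$-projective module $\Gamma e$. Consequently, for any $\Lambda$-projective $P=\bigoplus\Lambda e_i$, the radical $\rad(P)=\rad\Lambda\cdot P=\bigoplus \rad\Lambda\cdot e_i$ inherits a canonical left $\Gamma$-module structure and embeds $\Gamma$-linearly into the $\Gamma$-projective $\widetilde{P}:=\bigoplus\Gamma e_i$.

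For part (a), take $\Lambda$-projective covers $P_0\to X$ and $P_1\to\Omega_\Lambda(X)$; standard properties of projective covers give $\Omega_\Lambda(X)\subseteq\rad(P_0)$ and $\Omega_\Lambda^2(X)\subseteq\rad(P_1)$. The composite $\Lambda$-linear map $f\colon P_1\to P_0$ sends $\rad(P_1)$ into $\rad(P_0)$, and a direct check shows this restriction is in fact $\Gamma$-linear: for $\gamma\in\Gamma$, $r\in\rad\Lambda$ and an idempotent generator $e$ of $P_1$, both $f(\gamma\cdot(re))$ and $\gamma\cdot f(re)$ equal $(\gamma r)f(e)$. Hence $\Omega_\Lambda^2(X)=\ker(f|_{\rad(P_1)})$ is a $\Gamma$-submodule of $\rad(P_1)$, proving (a).

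For part (b), write $P_1=\bigoplus\Lambda e_i$ and set $\widetilde{P}_1:=\bigoplus\Gamma e_i$, a finitely generated $\Gamma$-projective module that contains $\Omega_\Lambda^2(X)$ as a $\Gamma$-submodule. Define $Z:=\widetilde{P}_1/\Omega_\Lambda^2(X)$, so that $0\to\Omega_\Lambda^2(X)\to\widetilde{P}_1\to Z\to 0$ is short exact in $\Gamma\text{-}\mod$ with projective middle, and pick a $\Gamma$-projective cover $\pi\colon P^\Gamma\to Z$ with kernel $\Omega_\Gamma(Z)$. Schanuel's lemma applied to these two projective-middle extensions of $Z$ yields $\Omega_\Lambda^2(X)\oplus P^\Gamma\cong\Omega_\Gamma(Z)\oplus\widetilde{P}_1$; since $\pi$ is essential and $P^\Gamma$ is projective, the lift of the surjection $\widetilde{P}_1\to Z$ through $\pi$ is a split surjection onto $P^\Gamma$, giving $\widetilde{P}_1\cong P^\Gamma\oplus Q$ for some $\Gamma$-projective $Q$. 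Substituting and cancelling $P^\Gamma$ via Krull--Schmidt in $\Gamma\text{-}\mod$ then produces $\Omega_\Lambda^2(X)\cong\Omega_\Gamma(Z)\oplus Q$, as required. The key insight, and also the main potential pitfall, is that $\rad(P_1)$ itself need not be $\Gamma$-projective; one must enlarge it to the ambient $\Gamma$-projective $\widetilde{P}_1$ in order to put a Schanuel-type argument into effect.
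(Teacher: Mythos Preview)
The paper does not supply its own proof of this lemma; it simply quotes it from Xi's erratum \cite{xi2005Erratum}. Your argument is correct and is essentially the standard one behind that reference: the crucial observations are that $\rad P$ is a $\Gamma$-module for any $\Lambda$-projective $P$ (because $\Gamma\cdot\rad\Lambda\subseteq\rad\Lambda$), that the restriction to radicals of a $\Lambda$-linear map with image in $\rad P_0$ becomes $\Gamma$-linear (your computation $f(\gamma\cdot re)=(\gamma r)f(e)=\gamma\cdot f(re)$ is exactly the point, using $\gamma r\in\rad\Lambda\subseteq\Lambda$), and that one then runs Schanuel plus Krull--Schmidt after enlarging to the ambient $\Gamma$-projective $\widetilde{P}_1=\bigoplus\Gamma e_i$.

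Two small remarks that make the write-up airtight. First, Krull--Schmidt cancellation is legitimate because $\Gamma$ is by hypothesis an artin algebra (Definition~\ref{leftideaext}); you might say this explicitly. Second, the equality $\Omega_\Lambda^2(X)=\ker(f|_{\rad P_1})$ uses both that $\ker f=\Omega_\Lambda^2(X)$ and that $\Omega_\Lambda^2(X)\subseteq\rad P_1$; you state the latter but it is worth noting that the former holds because $f$ factors as $P_1\twoheadrightarrow\Omega_\Lambda(X)\hookrightarrow P_0$. Your closing comment about needing $\widetilde{P}_1$ rather than $\rad P_1$ is well taken and is precisely why the erratum \cite{xi2005Erratum} was written.
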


%\begin{theorem}
%Let $A$ be an artin algebra. Then $\extdim A \leqslant \lrepdis (A)+1.$
%\end{theorem}
\begin{theorem}\label{thm-4.4}
Let $\Lambda $ be an artin algebra, such that there exists a chain of artin algebras
$\Lambda =\Lambda _{0}\subseteq \Lambda_{1}\subseteq \cdots\subseteq\Lambda_{m}=\Gamma $ with $m\geqslant 1$, where $\Lambda_{i+1}$
is a left idealized extension of $\Lambda_{i}$ for $0\leqslant i\leqslant m-1$, and $\Gamma$ is representation-finite.

$(1)$ Then $\Lambda$ is an $(m-1,2)$-IT algebra.

$(2)$ Furthremore $\tridim D^{b}(\Lambda\text{-}\mod ) \leqslant 2m+1$ and $\extdim \Lambda \leqslant m+1.$
\end{theorem}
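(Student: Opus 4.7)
The plan is to build, for every $M\in\Lambda\text{-}\mod$, a length-$(m-1)$ resolution of $\Omega_\Lambda^2(M)$ whose terms all lie in $\add V$, where
$$V:={}_\Lambda U\oplus\bigoplus_{i=1}^{m}{}_\Lambda\Lambda_i,$$
with $U$ an additive generator of $\Gamma\text{-}\mod$ (which exists because $\Gamma$ is representation-finite) and every summand viewed as a left $\Lambda$-module by restriction of scalars along $\Lambda\subseteq\Lambda_i$. Once such a resolution is produced, the definition of an $(m-1,2)$-IT algebra is satisfied, proving (1); part (2) then follows immediately from (1) combined with Remark \ref{ITextensiondimremark}, which yields $\tridim D^{b}(\Lambda\text{-}\mod)\leqslant 2(m-1)+2+1=2m+1$ and $\extdim\Lambda\leqslant (m-1)+2=m+1$.

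To construct the resolution, iterate Lemma \ref{7.3} along the chain $\Lambda=\Lambda_{0}\subseteq\cdots\subseteq\Lambda_{m}=\Gamma$. Set $N_{0}:=\Omega_{\Lambda}^{2}(M)$. At step $k$ ($1\leqslant k\leqslant m-1$), I will maintain the invariant that $N_{k-1}\cong \Omega_{\Lambda_{k-1}}^{2}(Z_{k-1})\oplus Q_{k-1}\oplus\cdots\oplus Q_{1}$ as $\Lambda_{k-1}$-modules, where $Z_{k-1}\in\Lambda_{k-1}\text{-}\mod$ and each $Q_{i}$ is a $\Lambda_{i}$-projective. Applying Lemma \ref{7.3} to the inclusion $\Lambda_{k-1}\subseteq\Lambda_{k}$ and the module $Z_{k-1}$ produces $Z_{k}\in\Lambda_{k}\text{-}\mod$ and a $\Lambda_{k}$-projective $Q_{k}$ with $\Omega_{\Lambda_{k-1}}^{2}(Z_{k-1})\cong\Omega_{\Lambda_{k}}(Z_{k})\oplus Q_{k}$ as $\Lambda_{k}$-modules. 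Taking a $\Lambda_{k}$-projective presentation $0\to\Omega_{\Lambda_{k}}^{2}(Z_{k})\to P_{k,1}\to\Omega_{\Lambda_{k}}(Z_{k})\to 0$ and adding the summand $Q_{k}\oplus\cdots\oplus Q_{1}$ gives a short exact sequence of $\Lambda$-modules
$$0\to N_{k}\to V_{k-1}\to N_{k-1}\to 0,$$
with $V_{k-1}:=P_{k,1}\oplus Q_{k}\oplus\cdots\oplus Q_{1}\in\add\bigl(\bigoplus_{i=1}^{k}{}_\Lambda\Lambda_{i}\bigr)\subseteq\add V$, and with $N_{k}:=\Omega_{\Lambda_{k}}^{2}(Z_{k})\oplus Q_{k}\oplus\cdots\oplus Q_{1}$, which propagates the invariant.

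Splicing these $m-1$ sequences for $k=1,\ldots,m-1$ yields
$$0\to N_{m-1}\to V_{m-2}\to\cdots\to V_{0}\to\Omega_{\Lambda}^{2}(M)\to 0.$$
One final application of Lemma \ref{7.3}, this time to the last inclusion $\Lambda_{m-1}\subseteq\Lambda_{m}=\Gamma$ on the module $Z_{m-1}$, rewrites the leading summand of $N_{m-1}$ as $\Omega_{\Gamma}(Z_{m})\oplus Q_{m}$ for some $Z_{m}\in\Gamma\text{-}\mod$. Since $\Omega_{\Gamma}(Z_{m})\in\Gamma\text{-}\mod\subseteq\add{}_\Lambda U$ and each $Q_{i}\in\add{}_\Lambda\Lambda_{i}$, it follows that $N_{m-1}\in\add V$. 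Renaming $V_{m-1}:=N_{m-1}$ completes the desired length-$(m-1)$ resolution.

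The main difficulty I anticipate is the bookkeeping: every decomposition supplied by Lemma \ref{7.3} is an isomorphism at some intermediate level $\Lambda_{k}$, and I must track it down to an identification of $\Lambda$-modules in order to splice the sequences globally; one must also check that the iterative invariant above is maintained correctly and that the final module $V$ absorbs all the varying $\Lambda_{k}$-projective summands through a single choice over the base ring. The boundary case $m=1$, in which no splicing occurs and only the final clean-up step is used, serves as a useful consistency check: it yields $\Omega_{\Lambda}^{2}(M)\in\add({}_\Lambda U\oplus{}_\Lambda\Gamma)$ and hence a $(0,2)$-IT structure on $\Lambda$.
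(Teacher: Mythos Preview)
Your overall strategy is exactly the paper's: iterate Lemma \ref{7.3} along the chain, at each step use a $\Lambda_k$-projective presentation of $\Omega_{\Lambda_k}(Z_k)$ to produce a short exact sequence, splice, and absorb the last kernel into an additive generator of $\Gamma\text{-}\mod$. Part (2) is then immediate, as you say.

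There is, however, a genuine bookkeeping slip in your inductive step. You claim the short exact sequence
\[
0\longrightarrow N_k\longrightarrow V_{k-1}\longrightarrow N_{k-1}\longrightarrow 0
\]
with $N_k=\Omega_{\Lambda_k}^{2}(Z_k)\oplus Q_k\oplus\cdots\oplus Q_1$, $V_{k-1}=P_{k,1}\oplus Q_k\oplus\cdots\oplus Q_1$, and $N_{k-1}\cong\Omega_{\Lambda_k}(Z_k)\oplus Q_k\oplus\cdots\oplus Q_1$. This cannot be exact: already in the Grothendieck group one would need $[P_{k,1}]+[Q]=[\Omega_{\Lambda_k}^{2}(Z_k)]+[\Omega_{\Lambda_k}(Z_k)]+2[Q]$ with $Q:=Q_k\oplus\cdots\oplus Q_1$, whereas $[P_{k,1}]=[\Omega_{\Lambda_k}^{2}(Z_k)]+[\Omega_{\Lambda_k}(Z_k)]$, forcing $[Q]=0$. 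Concretely, if you add $Q$ via the identity to the middle and right terms of $0\to\Omega_{\Lambda_k}^{2}(Z_k)\to P_{k,1}\to\Omega_{\Lambda_k}(Z_k)\to 0$, the kernel stays $\Omega_{\Lambda_k}^{2}(Z_k)$, not $\Omega_{\Lambda_k}^{2}(Z_k)\oplus Q$.

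The fix is to stop accumulating the $Q_i$'s in $N_k$: set $N_k:=\Omega_{\Lambda_k}^{2}(Z_k)$, so the invariant is simply $N_{k-1}=\Omega_{\Lambda_{k-1}}^{2}(Z_{k-1})$, and the sequence becomes
\[
0\longrightarrow \Omega_{\Lambda_k}^{2}(Z_k)\longrightarrow P_{k,1}\oplus Q_k\longrightarrow \Omega_{\Lambda_{k-1}}^{2}(Z_{k-1})\longrightarrow 0,
\]
valid as a sequence of $\Lambda_k$-modules and hence of $\Lambda$-modules. This is precisely the paper's argument (with your $P_{k,1},Q_k$ playing the role of the paper's $P'_k,P_k$), and then $V_{k-1}\in\add_{\Lambda}\Lambda_k$, so the single module $V=\bigl(\bigoplus_{i=1}^{m-1}{}_\Lambda\Lambda_i\bigr)\oplus{}_\Lambda U$ suffices; the summand ${}_\Lambda\Lambda_m$ in your $V$ is unnecessary since ${}_\Lambda\Lambda_m\in\add{}_\Lambda U$. (Alternatively you could keep your invariant and instead set $V_{k-1}:=P_{k,1}\oplus Q^2$, but there is no reason to carry the old $Q_i$'s forward.) With this correction your proof goes through and matches the paper.
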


\begin{proof}

(1) Let $X_{0}\in\mod \Lambda$. Since $\Lambda_{1}$ is a left idealized extension of $\Lambda$, by Lemma \ref{7.3}, we have
$\Omega^{2}_{\Lambda}(X_{0})\cong \Omega_{\Lambda_{1}}(X_{1})\oplus P_{1}$ in
$\Lambda_{1}\text{-}\mod $ for some $X_{1},P_{1}\in \Lambda_{1}\text{-}\mod$
with $_{\Lambda_{1}}P_{1}$ projective.
Inductively, we can get the isomorphism
$$\Omega^{2}_{\Lambda_{i}}(X_{i})\cong \Omega_{\Lambda_{i+1}}(X_{i+1})\oplus P_{i+1}$$ in
$\Lambda_{i+1}\text{-}\mod $ for some $X_{i+1},P_{i+1}\in \Lambda_{i+1}\text{-}\mod$
with $_{\Lambda_{i+1}}P_{i+1}$ projective for each $0 \leqslant i\leqslant m-1$.
On the other hand, there is an exact sequence
$$0 \longrightarrow \Omega_{\Lambda_{i+1}}^{2}(X_{i+1}) \longrightarrow P'_{i+1}
 \longrightarrow  \Omega_{\Lambda_{i+1}}(X_{i+1})\longrightarrow 0$$
in $\Lambda_{i+1}\text{-}\mod $, where $P'_{i+1}$ is the projective cover of
$\Omega_{\Lambda_{i+1}}(X_{i+1})$.
Then we have the following exact sequence
$$0 \longrightarrow \Omega_{\Lambda_{i+1}}^{2}(X_{i+1}) \longrightarrow P'_{i+1}\oplus P_{i+1}
 \longrightarrow  \Omega^{2}_{\Lambda_{i}}(X_{i})\longrightarrow 0$$
in $\Lambda_{i+1}\text{-}\mod $, for each $0 \leqslant i\leqslant m-1$.
They are also exact in $\Lambda\text{-}\mod.$

Therefore, we have the following exact sequence
$$0 \longrightarrow \Omega_{\Lambda_{m-1}}^{2}(X_{m-1}) \longrightarrow P'_{m-1}\oplus P_{m-1}
\longrightarrow \cdots \longrightarrow  P'_{1}\oplus P_{1} \longrightarrow\Omega^{2}_{\Lambda}(X_{0})  \longrightarrow 0$$
in $\Lambda\text{-}\mod $,
 and $  P'_{i}\oplus P_{i}\in \add_{\Lambda}(\oplus_{i=1}^{m-1} \Lambda_{i})$
for each $1 \leqslant i\leqslant m-1$.

Since $\Gamma=\Lambda_{m}$ is representation finite, then there exists a module $W$
such that $\add W=\mod \Lambda_{m}$.
On the other hand, by Lemma \ref{7.3}, we know that
$\Omega_{\Lambda_{m-1}}^{2}(X_{m-1})\in \mod \Lambda_{m}$,
that is, $\Omega_{\Lambda_{m-1}}^{2}(X_{m-1})\in \add W$.
Now, by Definition \ref{mnITalgebra}, we know that
$\Lambda$ is an $(m-1,2)$-IT algebra.

(2) By Remark \ref{ITextensiondimremark}(2),
 we get that
$\tridim D^{b}(\Lambda\text{-}\mod ) \leqslant 2(m-1)+2+1=2m+1.$ By Remark \ref{ITextensiondimremark}(1),
 we get that
%$\tridim D^{b}(\Lambda\text{-}\mod ) \leqslant 2(m-1)+2+1=2m+1.$ and
$\extdim \Lambda \leqslant (m-1)+2= m+1.$
\end{proof}
\begin{definition}\label{distance}{\rm (\cite[Page 9050]{xi2005finitistic})
  Given an artin algebra $\Lambda$, %we say that
  the left representation distance of $\Lambda$,
  denoted by $\lrepdis(\Lambda)$, is defined as
  $\lrepdis(\Lambda):=0$ if
  $\Lambda$ is representation finite; and
\begin{center}
$\lrepdis(\Lambda):={\rm min}\{s\,|\,
\text{there exists a chain of subalgebras}\, \Lambda=\Lambda_{0}\subsetneq \Lambda_{1}\subsetneq \Lambda_{2}\subsetneq \cdots \subsetneq \Lambda_{s}\,\text{such that}\,\rad (\Lambda_{i})\,\text{is a left ideal in}\, \Lambda_{i+1}\, \text{for all}\, i \,\text{and that}\, \Lambda_{s}\, \text{is representation finite}\}$
\end{center}
if $\Lambda$ is not representation finite.}
\end{definition}
By Theorem \ref{thm-4.4} and Definition \ref{distance}, we have
\begin{corollary}\label{cor.dim}
For any artin algebra, we have

$(1)$ $\tridim D^{b}(\Lambda\text{-}\mod) \leqslant 2\lrepdis(\Lambda)+1;$

$(2)$ $\extdim \Lambda \leqslant \lrepdis(\Lambda)+1.$
\end{corollary}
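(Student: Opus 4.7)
The plan is to derive both inequalities as essentially direct specializations of Theorem \ref{thm-4.4}, splitting into cases according to whether $\Lambda$ is representation-finite or not, since Definition \ref{distance} treats these two cases separately and Theorem \ref{thm-4.4} requires $m\geqslant 1$.

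First I would handle the trivial case $\Lambda$ representation-finite. Here $\lrepdis(\Lambda)=0$ by Definition \ref{distance}, and Lemma \ref{ext-rep} gives $\extdim \Lambda = 0$, so inequality (2) is immediate. For (1) I would invoke the upper bound $\tridim D^{b}(\Lambda\text{-}\mod)\leqslant 2\extdim \Lambda + 1$ recalled in the introduction (from \cite{zheng2022thedimension}) to conclude $\tridim D^{b}(\Lambda\text{-}\mod)\leqslant 1 = 2\lrepdis(\Lambda)+1$.

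Next I would treat the non-representation-finite case. Let $s:=\lrepdis(\Lambda)\geqslant 1$. By Definition \ref{distance} the minimum is attained, so there exists a chain
\[
\Lambda=\Lambda_{0}\subsetneq \Lambda_{1}\subsetneq \cdots \subsetneq \Lambda_{s}
\]
of artin algebras in which $\rad(\Lambda_{i})$ is a left ideal of $\Lambda_{i+1}$ for every $i$ (that is, each $\Lambda_{i+1}$ is a left idealized extension of $\Lambda_{i}$ in the sense of Definition \ref{leftideaext}) and with $\Lambda_{s}$ representation-finite. Applying Theorem \ref{thm-4.4} to this chain with $m=s$ then yields simultaneously $\tridim D^{b}(\Lambda\text{-}\mod)\leqslant 2s+1$ and $\extdim \Lambda\leqslant s+1$, which are exactly the desired inequalities (1) and (2).

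There is no real obstacle here: the statement is a packaging corollary, and the only care needed is to notice that $m\geqslant 1$ is assumed in Theorem \ref{thm-4.4} so the representation-finite case must be peeled off and dispatched by Lemma \ref{ext-rep} together with the $\tridim\leqslant 2\extdim+1$ bound. Everything else is just unwinding the definition of $\lrepdis$.
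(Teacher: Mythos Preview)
Your proposal is correct and matches the paper's own approach: the paper simply records the corollary as an immediate consequence of Theorem~\ref{thm-4.4} and Definition~\ref{distance} without further argument. Your separate treatment of the representation-finite case (needed because Theorem~\ref{thm-4.4} assumes $m\geqslant 1$) is a welcome clarification that the paper leaves implicit.
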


The following proposition gives a family of algebras $A$ and their 2-step left idealized extensions $C \subseteq B \subseteq A$ illustrating how Theorem \ref{thm-4.4}, gives upper bounds for $\tridim$ and $\extdim$,
which are much better then the bounds obtained by some other known methods.

\begin{proposition}\label{ex}{\rm
Let $A$ be the algebra (over a field) defined by the following quiver
$$\xymatrix{\bullet
&\bullet\ar[l]_{\lambda}^(1){5}^(0){2}
&\bullet\ar[l]_{\epsilon}^(0){3}
&\bullet\ar[l]_{\xi}^(0){1} &\bullet\ar[l]_{\beta}^(0){4}
&\bullet\ar[l]_{\alpha}^(0){6}
&\bullet\ar[l]_{\sigma_1}^(0){7}
&\bullet\ar[l]_{\sigma_2}^(0){8}
\cdots
&\bullet\ar[l]_{\sigma_{s-6}}^(0){s}
&\bullet\ar[l]_{\tau_1}^(0){s+1}
&\bullet\ar[l]_{\tau_2}^(0){s+2}
\cdots
&\bullet\ar[l]_{\tau_t}^(0){s+t}
}$$
with relations: $\alpha\beta\xi\epsilon\lambda=0,\tau_1\sigma_{s-6}=0,\tau_{i+1}\tau_i=0,1\leqslant i<t$. And assume that $s\geqslant 7$ and $t\textgreater 1$.

Let $B$ be the subalgebra of $A$ generated by $$\{e_{1}, e_{2'}:=e_{2}+e_{4}+e_{5},
e_{3'}=e_{3}+e_{6},e_{7},e_8,\cdots,e_{s+t}, \lambda, \beta,
\alpha,\epsilon, \gamma:=\xi \epsilon, \delta:=\beta \xi,\sigma_{1},\sigma_{2},\cdots,\sigma_{s-6},\tau_1,\tau_2,\cdots,\tau_t \},$$where $e_{i}$ is the primitive idempotent element of $A$ corresponding to the vertex $i$.

Let $C$ be the subalgebra of $B$ generated by
$\{e_{1}, e_{2'},
e_{3'},e_{7},e_8,\cdots,e_{s+t}, \lambda, \beta,
\alpha+\epsilon, \gamma, \delta,\sigma_{1},\sigma_{2},\cdots,\sigma_{s-6},\\ \tau_1,\tau_2,\cdots,\tau_t \}$.
Then:

$(a)$ The chain $C \subseteq B \subseteq A$
 is a chain of idealized extensions with algebra $A$ of finite representation type.

$(b)$ The algebra $C$ is a $(1, 2)$-IT algebra.

$(c)$ $\tridim D^{b}(\mod C)  \leqslant 5$ and $\extdim C\leqslant 3.$

$(d)$ The bounds from known results \cite{rouquier2008dimensions}, \cite{zheng2022derivedmnIT}, \cite{zheng2022thedimension}, \cite{zheng2020extension} are: $\tridim D^{b}(\mod C)  \leqslant \min\{s-3, t+8\}$ and $\extdim C\leqslant t+5.$

}\end{proposition}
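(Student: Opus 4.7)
The plan proceeds in four steps corresponding to the four claims (a)--(d).

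Step 1 addresses part (a). First I verify that $A$ is representation-finite: the quiver of $A$ is a linear type $\mathbb{A}$ quiver carrying only monomial zero-relations $\alpha\beta\xi\epsilon\lambda = 0$, $\tau_{1}\sigma_{s-6}=0$, and $\tau_{i+1}\tau_{i}=0$; hence $A$ is a Nakayama algebra and in particular representation-finite. For the extension $B \subseteq A$, I check that $\rad B$ is a left ideal in $A$: the primitive idempotents of $B$ are obtained from those of $A$ by merging $\{e_{2},e_{4},e_{5}\}$ into $e_{2'}$ and $\{e_{3},e_{6}\}$ into $e_{3'}$, while $\rad B$ is generated by the arrows $\lambda,\beta,\alpha,\epsilon,\sigma_{i},\tau_{j}$ together with the length-two paths $\gamma = \xi\epsilon$ and $\delta = \beta\xi$. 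A direct bookkeeping computation shows that left multiplication of any such generator by any primitive idempotent or arrow of $A$ lands inside $\rad B$. An analogous verification works for $\rad C \subseteq B$, using that $C$ replaces $\alpha$ and $\epsilon$ by their sum $\alpha+\epsilon$ and that the radical of $C$ is still stable under left multiplication by $B$.

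Step 2 handles parts (b) and (c). With (a) in place, the chain $C = C_{0} \subsetneq C_{1}:= B \subsetneq C_{2}:= A$ has length $m = 2$ and terminates in the representation-finite algebra $A$, so Theorem \ref{thm-4.4} immediately gives that $C$ is a $(1,2)$-Igusa-Todorov algebra together with the bounds $\tridim D^{b}(\mod C) \leqslant 2m+1 = 5$ and $\extdim C \leqslant m+1 = 3$.

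Step 3 addresses part (d). I would compare (c) against the a priori upper bounds coming from the cited results. The Loewy length of $C$ grows linearly in $t$ because the chain of $\tau$'s at the tail of the quiver is only constrained by the length-two relations $\tau_{i+1}\tau_{i}=0$; combined with the $\sigma$-segment and the merged vertices $2',3'$ this yields $\LL(C) = t+6$, so Lemma \ref{lemextension2} gives the known bound $\extdim C \leqslant \LL(C) - 1 = t + 5$. For the derived dimension, one bound comes from $\tridim D^{b}(\mod C) \leqslant 2\extdim C + 1$ of \cite{zheng2022thedimension} together with the Loewy-length bound, yielding approximately $t + 8$; another comes from estimating $\tridim$ through $\gldim C$, which is controlled by the long monomial relation $\alpha\beta\xi\epsilon\lambda = 0$ threaded along the $\sigma$-segment and produces $s - 3$. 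Taking the minimum recovers the advertised $\min\{s-3,\,t+8\}$.

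The main obstacle is the combinatorial verification in Step 1: one must carefully argue that $B$ and $C$ are honest subalgebras and that the radical of each smaller algebra is a left ideal in the larger, which amounts to enumerating products of the stated generators with the idempotents and arrows of the ambient algebra. Once Step 1 is established, Steps 2 and 3 reduce to a direct invocation of Theorem \ref{thm-4.4} together with routine numerical comparisons.
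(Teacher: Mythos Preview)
Your approach to parts (a), (b), and (c) is essentially the paper's: verify $A$ is Nakayama, check the idealized-extension conditions directly, and then invoke Theorem~\ref{thm-4.4} with $m=2$.

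Part (d), however, contains genuine errors. First, your Loewy-length computation is inverted: it is the $\sigma$-segment, not the $\tau$-segment, that is unconstrained, because the only relations touching the $\sigma_i$ are $\tau_{1}\sigma_{s-6}=0$. Each $P(s+j)$ has Loewy length~$2$ (due to $\tau_{i+1}\tau_i=0$), whereas $P(s)$ sits over $P(s-1)$ sits over $\cdots$ over $P(7)$ over $P(3')$, giving $\LL(C)=s-2$, not $t+6$. Thus the bound $\LL(C)-1$ produces the $s-3$ term, not a bound depending on~$t$. Second, your claim that $\gldim C$ is finite and yields the $s-3$ is wrong: one checks directly from the projective resolutions of $S(1),S(2'),S(3')$ that these simples have infinite projective dimension, so $\gldim C=\infty$ contributes nothing. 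Third, even under your own (incorrect) hypotheses, the arithmetic fails: from $\extdim C\leqslant t+5$ the bound $2\extdim C+1$ would give $2t+11$, not $t+8$.

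The $t$-dependent bounds in (d) do not come from Loewy length or global dimension at all. They come from the $\mathcal{V}$-relative Loewy-length invariant $\ell\ell^{t_{\mathcal{V}}}(C)$ of \cite{zheng2022derivedmnIT} and \cite{zheng2022thedimension}: with $\mathcal{V}=\{S(i)\mid 7\leqslant i\leqslant s+t\}$ one has $\pd\mathcal{V}=t+1$ and $\ell\ell^{t_{\mathcal{V}}}(C)=4$, whence $\tridim D^{b}(\mod C)\leqslant 2\cdot 4+(t+1)-1=t+8$ and $\extdim C\leqslant 4+(t+1)=t+5$. You will need to compute the projective resolutions of the simples $S(7),\dots,S(s+t)$ (all of finite projective dimension, with $\pd S(s+j)=j+1$) and the structure of $P(2')$ and $P(3')$ to obtain these numbers.
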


\begin{proof}
It is obvious that $A$ is a Nakayama algebra, and hence representation-finite.
We can check that $B$ can be described by the following quiver
$$
\xymatrix{
\bullet\ar@<0.4ex>[r]^{\gamma}
&\bullet\ar@<0.4ex>[l]^{\beta}^(1){1}
\ar@(ur,ul)_{\lambda}\ar@<1ex>[d]|-{\delta}^(0){2'}
&&&&&&\\
&\bullet\ar@<1ex>[u]|-{\alpha}
\ar[u]|-{\epsilon}
&\bullet\ar[l]_{\sigma_1}^(0){7}^(1){3'}
&\bullet\ar[l]_{\sigma_2}^(0){8}
\cdots
&\bullet\ar[l]_{\sigma_{s-6}}^(0){s}
&\bullet\ar[l]_{\tau_1}^(0){s+1}
&\bullet\ar[l]_{\tau_2}^(0){s+2}
\cdots
&\bullet\ar[l]_{\tau_t}^(0){s+t}
}$$
with relations $\beta\gamma=\delta\epsilon,
\gamma\beta=\gamma\delta=\lambda^{2}=\lambda\beta=\lambda\delta
=\delta\alpha=\epsilon\beta=\epsilon\delta=\alpha\lambda
=\alpha\beta\gamma\lambda=0,\tau_1\sigma_{s-6}=0,\tau_{i+1}\tau_i=0,1\leqslant i<t$.

And $C$ can be described by quiver
$$
\xymatrix{
\bullet\ar@<0.4ex>[r]^{\gamma}
&\bullet\ar@<0.4ex>[l]^{\beta}^(1){1}
\ar@(ur,ul)_{\lambda}\ar@<1.2ex>[d]|-{\delta}^(0){2'}
&&&&&&\\
&\bullet\ar@<1.2ex>[u]|-{\alpha+\epsilon}
&\bullet\ar[l]_{\sigma_1}^(0){7}^(1){3'}
&\bullet\ar[l]_{\sigma_2}^(0){8}
\cdots
&\bullet\ar[l]_{\sigma_{s-6}}^(0){s}
&\bullet\ar[l]_{\tau_1}^(0){s+1}
&\bullet\ar[l]_{\tau_2}^(0){s+2}
\cdots
&\bullet\ar[l]_{\tau_t}^(0){s+t}
}$$
with relations $\beta\gamma=\delta(\alpha+\epsilon),
\gamma\beta=\gamma\delta=\lambda^{2}=\lambda\beta=\lambda\delta=(\alpha+\epsilon)\beta\gamma\lambda=0,
\tau_1\sigma_{s-6}=0,\tau_{i+1}\tau_i=0,1\leqslant i<t$.

(a) It is easy to check that $C \subseteq B \subseteq A$ is a chain of subalgebras of $A$ such that $\rad(C)$ is a left ideal of $B$ and
$\rad(B)$ is a left ideal of $A$. Therefore the chian $C \subseteq B \subseteq A$ is a chain of idealized extensions with algebra $A$ of finite representation type by Definition \ref{leftideaext}.

(b) By (a) and Theorem \ref{thm-4.4}(1), $C$ is $(1, 2)$-IT.

(c) By (a) and Theorem \ref{thm-4.4}(2), we have $\tridim D^{b}(\mod C)  \leqslant 2\times 2+1=5$ and $\extdim C\leqslant 2+1=3$.

(d) The Loewy structure of the indecomposable projective $C$-modules can be listed as follows.
$${\footnotesize
\xymatrix@R=.23cm@C=.01cm{
P(1)\\
1\ar@{-}[d]\\
2'\ar@{-}[d]\\
2'
}\quad
\xymatrix@R=.23cm@C=.01cm{
&&P(2')&\\
&&2'\ar@{-}[dll]\ar@{-}[dl]\ar@{-}[dr]&\\
2'&1\ar@{-}[dr]&&3'\ar@{-}[dl]\\
&&2'\ar@{-}[d]&\\
&&2'&
}\quad
\xymatrix@R=.23cm@C=.01cm{
&&P(3')&\\
&&3'\ar@{-}[d]&\\
&&2'\ar@{-}[dl]\ar@{-}[dr]\ar@{-}[dll]&\\
2'&1\ar@{-}[dr]&&3'\ar@{-}[dl]\\
&&2'&\\
}\quad
\xymatrix@R=.23cm@C=.01cm{
P(7)\\
7\ar@{-}[d]\\
P(3')
}\quad
\xymatrix@R=.23cm@C=.01cm{
P(8)\\
8\ar@{-}[d]\\
P(7)
}\quad\cdots
\xymatrix@R=.23cm@C=.01cm{
P(s)\\
n\ar@{-}[d]\\
P(s-1)
}\quad
\xymatrix@R=.23cm@C=.01cm{
P(s+1)\\
s+1\ar@{-}[d]\\
s
}\quad
\xymatrix@R=.23cm@C=.01cm{\small
P(s+2)\\
s+2\ar@{-}[d]\\
s+1
}\quad\cdots
\xymatrix@R=.23cm@C=.01cm{
P(s+t)\\
s+t\ar@{-}[d]\\
s+t-1
}
}$$
Then $\LL(C)=s-2$. Consider the following projective resolutions
$$0\to S(2')\to  P(2') \to P(1)\oplus P(3')\to P(2')\to P(1)\to S(1)\to 0,$$
$$0 \to M\oplus S(2') \to P(2') \to S(2') \to 0, \mbox{ and}$$
$$0\to S(2')\to P(2')  \to P(3') \to S(3') \to 0,$$
where the Loewy sturcture of $M$ is as follows:
$$\xymatrix@R=.23cm@C=.01cm{
1\ar@{-}[dr]&&3'\ar@{-}[dl]\\
&2'\ar@{-}[d]&\\
&2'&
}$$
Thus $\pd S(1)=\pd S(2')=\pd S(3')=\infty$ and $\gldim C=\infty$. Note that there are short exact sequences
$$0\to P(3')\to P(7)\to S(7)\to 0,$$
$$0\to P(i-1)\to P(i)\to S(i)\to 0, \mbox{ for } 8\leqslant i\leqslant s, \mbox{ and }$$
$$0\to S(s+j-1)\to P(s+j)\to S(s+j)\to 0, \mbox{ for } 1\leqslant j\leqslant  t.$$
Thus $\pd S(i)=1$ for $7\leqslant i\leqslant  s$ and $\pd S(s+j)=j+1$ for $1\leqslant  j\leqslant  t$.

Let $\V=\{S(i)\mid 7\leqslant i\leqslant  s+t\}$. Then $\pd \V=t+1$. By calculation, we have $\ell\ell^{t_{\V}}(C)=4$. By \cite[Proposition 7.4, Proposition 7.37]{rouquier2008dimensions} and \cite[Theorem 1.3]{zheng2022derivedmnIT}, we have
$$\tridim D^{b}(\mod C)  \leqslant \inf \{\gldim C, \LL(C)-1,2\ell\ell^{t_{\V}}(C)+\pd \V-1\}=\min\{s-3, t+8\}.$$
On the other hand, $\extdim C\leqslant \ell\ell^{t_{\V}}(C)+\pd \V =t+5$  by \cite[Corollary 3.15(2.1)]{zheng2022thedimension} or  \cite[Theorem 3.19]{zheng2020extension}.

\end{proof}

\begin{remark}{\rm
Note that by Theorem \ref{thm-4.4} we may get sometimes better upper bounds than \cite{rouquier2008dimensions}, \cite{zheng2022derivedmnIT}, \cite{zheng2022thedimension} and \cite{zheng2020extension}. For example, Proposition \ref{ex}(c) gives better upper bounds than (d) for $s\gg 0$ and $t\gg 0$.
}
\end{remark}

{\bf Acknowledgements.} Junling Zheng was supported by the National Natural Science Foundation of China (Grant No. 12001508). Yingying Zhang was supported by the National Natural Science Foundation of China (Grant No. 12201211). The authors would like to thank Jinbi Zhang for his helpful discussions. Moreover, the authors thank the referee for the detailed suggestions which made the paper more readable.
\vspace{0.2cm}

{\bf Conflict of interest.} The authors declare that they have no conflict of interest.

\vspace{0.6cm}
%
%{\bf Acknowledgements.} This work was partially supported by NSFC (No. 11971225), a Project Funded
%by the Priority Academic Program Development of Jiangsu Higher Education Institutions, Postgraduate Research and
%Practice Innovation Program of Jiangsu Province (Grant No. KYCX17\_0019).

%The first author would like to thank  for his suggestions.

%The authors would like to thank the referee for the encouraging comments.
%
\bibliographystyle{abbrv}

\begin{thebibliography}{10}

\bibitem{assem2006elements}
I.~Assem, D.~Simson, and A.~Skowronski.
\newblock {\em Elements of the Representation Theory of Associative Algebras:
  Volume 1: Techniques of Representation Theory}.
\newblock Cambridge University Press, 2006.

\bibitem{auslander1999representation}
M.~Auslander.
\newblock Representation dimension of artin algebras.
\newblock {\em Selected works of Maurice Auslander}, 1: 505--574, 1999.

\bibitem{ballard2012orlov}
M.~Ballard, D.~Favero, and L.~Katzarkov.
\newblock Orlov spectra: bounds and gaps.
\newblock {\em Inventiones Mathematicae}, 189(2): 359--430, 2012.

\bibitem{beligiannis2008some}
A.~Beligiannis.
\newblock Some ghost lemmas, survey for `the representation dimension of artin
  algebras', bielefeld.
\newblock 2008.

\bibitem{bergh2015gorenstein}
P.~A. Bergh, S.~Oppermann, and D.~A. Jorgensen.
\newblock The gorenstein defect category.
\newblock {\em The Quarterly Journal of Mathematics}, 66(2): 459--471, 2015.

\bibitem{bondal2003generators}
A.~I. Bondal and M.~Van~den Bergh.
\newblock Generators and representability of functors in commutative and
  noncommutative geometry.
\newblock {\em Moscow Mathematical Journal}, 3(1): 1--36, 2003.


\bibitem{chen2008algebras}
X.~W. Chen, Y.~Ye, and P.~Zhang.
\newblock Algebras of derived dimension zero.
\newblock {\em Communications in Algebra}, 36(1): 1--10, 2008.

\bibitem{dao2014radius}
H.~Dao and R.~Takahashi.
\newblock The radius of a subcategory of modules.
\newblock {\em Algebra and Number Theory}, 8(1): 141--172, 2014.

\bibitem{han2009derived}
Y.~Han.
\newblock Derived dimensions of representation-finite algebras.
\newblock {\em arXiv preprint arXiv:0909.0330}, 2009.

\bibitem{huang2013Endomorphism}
Z.~Huang and J.~Sun.
\newblock Endomorphism algebras and Igusa-Todorov algebras.
\newblock {\em Acta Mathematica Hungarica}, 140(1-2): 60-70, 2013.

\bibitem{iyama2003finiteness}
O.~Iyama.
\newblock Finiteness of representation dimension.
\newblock {\em Proceedings of the American Mathematical Society},
  131(4): 1011--1014, 2003.

\bibitem{iyama2003rejective}
O.~Iyama.
\newblock Rejective subcategories of artin algebras and orders.
\newblock {\em arXiv preprint math/0311281}, 2003.

\bibitem{oppermann2009lower}
S.~Oppermann.
\newblock Lower bounds for Auslander's representation dimension.
\newblock {\em Duke Mathematical Journal}, 148(2): 211--249, 2009.

\bibitem{oppermann2012generating}
S.~Oppermann and J.~{\v{S}}t'ov{\'\i}{\v{c}}ek.
\newblock Generating the bounded derived category and perfect ghosts.
\newblock {\em Bulletin of the London Mathematical Society}, 44(2): 285--298,
  2012.

\bibitem{rouquier2006representation}
R.~Rouquier.
\newblock Representation dimension of exterior algebras.
\newblock {\em Inventiones Mathematicae}, 165(2): 357--367, 2006.

\bibitem{rouquier2008dimensions}
R.~Rouquier.
\newblock Dimensions of triangulated categories.
\newblock {\em Journal of K-theory}, 1(2): 193--256, 2008.

\bibitem{wei2009finitistic}
J.~Wei.
\newblock Finitistic dimension and Igusa--Todorov algebras.
\newblock {\em Advances in Mathematics}, 222(6): 2215--2226, 2009.

\bibitem{wei2009relative}
J.~Wei.
\newblock Finitistic dimension conjecture and relative hereditary algebras.
\newblock {\em Journal of Algebra}, 322(12): 4198--4204, 2009.

%\bibitem{xi2005Erratum}
%Changchang and Xi.
%\newblock Erratum to "on the finitistic dimension conjecture i: related to
%  representation-finite algebras" [j. pure appl. algebra 193 (2004) 287¨C305]
%  - sciencedirect.
%\newblock {\em Journal of Pure and Applied Algebra}, 202(1-3):325--328, 2005.

\bibitem{xi2004finitistic}
C.~Xi.
\newblock On the finitistic dimension conjecture I: related to
  representation-finite algebras.
\newblock {\em Journal of Pure and Applied algebra}, 193(1-3): 287--305, 2004.

\bibitem{xi2005Erratum}
C.~Xi.
\newblock Erratum to ``on the finitistic dimension conjecture I: related to
  representation-finite algebras" [J. Pure Appl. Algebra 193 (2004) 287--305]
  - sciencedirect.
\newblock {\em Journal of Pure and Applied Algebra}, 202(1-3): 325--328, 2005.

\bibitem{xi2005finitistic}
C.~Xi.
\newblock On the finitistic dimension conjecture. \newblock {\em Oberwolfach Reports}, 2(1): 340-342, 2005.

\bibitem{xi2008finitistic}
C.~Xi.
\newblock On the finitistic dimension conjecture, III: Related to the pair $eAe
  \subseteq A$.
\newblock {\em Journal of Algebra}, 319(9): 3666--3688, 2008.

\bibitem{zhang24}
\newblock J. Zhang and  J. Zheng. Extension dimensions of derivd and stable equivalent algebras, \newblock {\em Journal of Algebra}, 646: 17-48, 2024.


\bibitem{zhang2022tau}
Y.~Zhang.
\newblock Derived dimension via $\tau$-tilting theory.
\newblock {\em Czechoslovak Mathematical Journal}, 71(146): 1167-1172, 2021.

\bibitem{zheng2022derivedmnIT}
J.~Zheng.
\newblock The derived dimensions of $(m, n) $-Igusa-Iodorov algebras.
\newblock {\em Journal of Algebra}, 612: 227-247, 2022.

\bibitem{zheng2020upper}
J.~Zheng and Z.~Huang.
\newblock An upper bound for the dimension of bounded derived categories.
\newblock {\em Journal of Algebra}, 556: 1211--1228, 2020.

\bibitem{zheng2022thedimension}
J.~Zheng and Z.~Huang.
\newblock The derived and extension dimensions of abelian categories.
\newblock {\em Journal of Algebra}, 606: 243--265, 2022.

\bibitem{zheng2020extension}
J.~Zheng, X.~Ma and Z.~Huang.
\newblock The extension dimension of abelian categories.
\newblock {\em Algebras and Representation Theory}, 23(3): 693--713, 2020.

\end{thebibliography}

\end{document}